\newtheorem{theorem}{Theorem}
\newtheorem{lemma}{Lemma}
\newtheorem{remark}{Remark}
\begin{document}
\title{Macroscale structural complexity analysis of subordinated spatiotemporal random fields}
\author{José M. Angulo  and María D. Ruiz-Medina}
\date{}

 \maketitle

\bigskip

\begin{abstract} Large-scale behavior of a wide class of spatial and spatiotemporal processes is characterized in terms of informational measures. Specifically, subordinated random fields defined by non-linear transformations on the family of homogeneous and isotropic Lancaster-Sarmanov random fields are studied under long-range dependence (LRD) assumptions. In the spatial case, it is shown that Shannon mutual information beween marginal distributions for infinitely increasing distance, which can be properly interpreted as a measure of macroscale structural complexity and diversity, has an asymptotic power decay that directly depends on the underlying LRD parameter, scaled by the subordinating function rank. Sensitivity with respect to distortion induced by the deformation parameter under the generalized form given by divergence-based Rényi mutual information is also analyzed. In the spatiotemporal framework, a spatial  infinite-dimensional  random field approach is adopted. The study of the large-scale asymptotic behavior is then extended under the proposal of a functional formulation of the Lancaster-Sarmanov random field class, as well as of divergence-based mutual information. Results are illustrated, in the context of geometrical analysis of sample paths, considering some scenarios based on Gaussian and Chi-Square subordinated spatial and spatio-temporal random fields.
\end{abstract}

\noindent \emph{Keywords:}
Lancaster-Sarmanov random field models; subordinated random fields; information measures; spatial functional models; structural complexity



\section{Introduction}
\label{sec:intro}

A growing interest  is observed, in the last few decades, on the parametric  (Bosq, 2000) and nonparametric (Ferraty and Vieu, 2006) spatiotemporal data  analysis based on infinite-dimensional spatial models. Particularly, the field of Functional Data Analysis (FDA) has been nurtured by various disciplines, including probability in abstract spaces (see, e.g., Ledoux and Talagrand, 1991).  The most recent contributions are developed in the framework of stochastic partial differential and pseudodifferential equations (see, e.g., Ruiz-Medina, 2022). Particularly in the Gaussian random field context, it is well-known that Gaussian measures in Hilbert spaces and associated infinite-dimensional quadratic forms play a crucial role (see Da Prato and Zabczyk, 2002). The tools developed have recently  been  exploited in several statistical papers on  spatiotemporal modeling under the umbrella of infinite-dimensional inference, based on statistical analysis of infinite-dimensional multivariate data and stochastic processes (see, e.g., Frías et al., 2022; Ruiz-Medina, 2022; Torres-Signes et al., 2021).  In particular, the statistical distance based approach  constitutes a useful tool in hypothesis testing (see, e.g., Ruiz-Medina, 2022, where  an estimation methodology based on a Kullback-Leibler divergence-like loss operator is proposed in the temporal  functional  spectral   domain). This methodology has been also exploited in structural complexity analysis based on sojourn measures of spatiotemporal Gaussian and Gamma-correlated random fields. Indeed, there exists a vast literature in the context of stochastic  geometrical analysis of the sample paths of random fields based on these measures (see, e.g.,  Bulinski et al., 2012; Ivanov and Leonenko, 1989, among others).  Special attention has been paid to the asymptotic analysis of long-range dependent  random fields (see Leonenko, 1999; Leonenko and Olenko, 2014; Makogin and Spodarev, 2022). Recently, in Leonenko and Ruiz-Medina (2022), new spatiotemporal limit results have been derived to analyze, in particular, the limit distribution of Minkowski functionals, in the context of Gaussian and Chi-Square subordinated spatiotemporal random fields.

The geometrical interpretation of these functionals, which, for instance in 2D, is related to the total area of all \emph{hot} regions, and the  total length of the boundary between \emph{hot} and \emph{cold} regions, as well as the Euler-Poincaré characteristic, counting the number of isolated \emph{hot} regions minus the number of isolated \emph{cold} regions within the \emph{hot} regions, has motivated  several statistical approaches, adopted, for instance, in the  Cosmic Microwave Background (CMB) evolution modeling and data analysis (see, e.g., Marinucci and Peccati, 2011).

The present work continues the above-referred research lines in relation to  structural complexity analysis of long-range dependent Gaussian and Chi-Square subordinated spatial and spatiotemporal random fields. Indeed, a more general random field  framework is considered, defined from the Lancaster-Sarmanov random field class.

The approach adopted here is based on the quantitative assessment in terms of appropriate information-theoretic measures, a framework which has played a fundamental role, with a very extensive related literature, in the probabilistic and statistical characterization and description of structural aspects inherent to stochastic systems arising in a wide variety of knowledge areas. More precisely, the asymptotic behavior, for infinitely increasing distance, of divergence-based Shannon and R\'enyi mutual information measures, which are formally and conceptually connected to certain forms of `complexity' and `diversity' (see, for instance, Angulo et al. 2021, and references therein), is derived for this random field class. This behavior is characterized by the long-range dependence (LRD) parameter, that in this context determines the global diversity loss, associated with lower values of such a parameter.  The deformation parameter $q \neq 1$ involved in the definition of  R\'enyi mutual information also modulates the power rate of decay of this structural dependence indicator as the distance between the considered spatial location increases. As is well-known, the derived asymptotic analysis results based on Shannon mutual information arise as limiting cases, for $q \to 1$, of the ones obtained based on R\'enyi mutual information. The spatiotemporal case is analyzed in an infinite-dimensional spatial framework. In particular, related spatiotemporal extensions of the Lancaster-Sarmanov random field class, as well as of divergence-based mutual information measures, are formalized under a functional approach. A simulation study is undertaken showing, in particular, that the same asymptotic orders as in the purely spatial case hold for the infinite-dimensional versions of divergence measures here considered (see, e.g., Angulo and Ruiz-Medina, 2023).

The paper content is structured as follows. Section \ref{section:preliminaries} provides the preliminary elements on the analyzed class of Lancaster-Sarmanov random fields, as well as the introduction of information-complexity measures.  The main results derived on the asymptotic macroscale behavior of  Shannon and R\'enyi mutual information measures, involving the bivariate distributions of the Lancaster-Sarmanov subordinated random field class studied, are obtained in Section  \ref{section:methodology-MI}. These aspects are illustrated in terms of some numerical examples in subsection \ref{subsection:spatial-simulations}. The functional approach to the spatiotemporal case based on an infinite-dimensional spatial framework is addressed in Section \ref{section:st-case}. Final comments, with a reference to open research lines, are given in Section \ref{section:conclusion}.

\section{Preliminaries}
\label{section:preliminaries}

Let $(\Omega, \mathcal{A},P)$ be the basic complete probability space, and denote by \linebreak $\mathcal{L}^{2}(\Omega,\mathcal{A},P)$ the Hilbert space of equivalence classes (with respect to $P$) of zero-mean second-order random variables on  $(\Omega, \mathcal{A},P)$.
Consider  $X=\{ X(\mathbf{z}),\ \mathbf{z}\in \mathbb{R}^{d}\}$ to be a zero-mean spatial homogeneous and isotropic mean-square continuous second-order random field, with correlation function $\gamma
(\|\mathbf{x}-\mathbf{y}\|)=\mbox{Corr}\left(X(\mathbf{x}),X(\mathbf{y})\right)$. Assume that the marginal probability distributions are absolutely continuous, having probability density $p(u)$ with support included in $(a,b)$, $-\infty\leq a<b\leq \infty$.
Let now $L^{2}((a,b), p(u)du)$  be the Hilbert space of equivalence classes of measurable real-valued functions on the interval $(a,b)$ which are square-integrable with respect to the measure $\mu (du)=p(u)du$.

\subsection{Lancaster-Sarmanov random field class}
\label{subsection:LS-class}
Assume that  there exists a  complete  orthonormal basis $\{e_{k},\ k\geq 0\}$,  with $e_{0}\equiv 1$,  of the space
$L^{2}((a,b), p(u)du)$  such that
\begin{eqnarray}&&
\frac{\partial^{2}}{\partial
u\partial v}P\left[ X(\mathbf{x})\leq u,X(\mathbf{y})\leq
v\right]=:
p(u,v,\|\mathbf{x}-\mathbf{y}\|)\nonumber\\&& =p(u)~p(v)~\left[ 1+\sum_{k=1}^{\infty }\gamma^{k}
(\|\mathbf{x}-\mathbf{y}\|)~e_{k}(u)~e_{k}(v)\right], \quad \forall \mathbf{x}, \mathbf{y} \in \mathbb{R}^{d}  .
  \label{210bb}
\end{eqnarray}
The family  of random fields $X$ satisfying the above conditions is known as the \emph{Lancaster-Sarmanov} random field class (see, e.g.,
Lancaster, 1958; Leonenko et al., 2017;  Sarmanov, 1963).  Gaussian  and Gamma-correlated random fields are two important cases within this class, with  $\{e_{k},\ k\geq 0\}$ being given by the (normalized) Hermite polynomial system in the Gaussian case (see, for example, Peccati and Taqqu, 2011), and by the Laguerre polynomials in the Gamma-correlated case. An interesting special case of the latter is defined by the Chi-Square random field family.

It is well-known that non-linear transformations of these random fields can be approximated in terms of the above series expansions: For every $g\in L^{2}((a,b), p(u)du)$,
\begin{equation} \label{ex-0}
g(x)=\sum_{k=0}^{\infty}C_{k}^{g}e_{k}(x),\quad \mbox{with} \quad C_{k}^{g} =\int_a^b g(u)~e_{k}(u)~p(u)~du,\ k\geq 0.
\end{equation}
In particular, $C_{0}^{g} = E_p[g(X)]$. The minimum integer $m$ such that $C_{k} = 0$ for all $1 \leq k \leq m-1$ represents the \emph{rank} of function $g$ in the orthonormal basis $\{e_{k},\ k\geq 0\}$  of the space
$L^{2}((a,b), p(u)du)$; that is,
\begin{equation} \label{ex}
g(x)=C_{0}^{g}+\sum_{k=m}^{\infty}C_{k}^{g}e_{k}(x).
\end{equation}
In the cases of Gaussian and Gamma-correlated  subordinated random fields we will refer to the Hermite and Laguerre ranks, respectively, of function $g$.

An interesting example is Minkowski functional $M_{0}(\nu; X,D)$ providing the random volume of the set of spatial points within $D$ (usually a bounded subset of $\mathbb{R}^{d}$) where random field $X$  crosses above a given threshold $\nu$. That is, denoting by $\lambda $ the Lebesgue measure on $\mathbb{R}^{d}$,
\begin{equation}M_{0}(\nu; X,D)=\int_{D}1_{\nu}(X(\mathbf{y}))~d\mathbf{y}=\lambda(S_{X,D}(\nu)),\label{mf}
\end{equation}
where
\begin{equation}
S_{X,D}(\nu)=\left\{ \mathbf{z}\in D:\ X(\mathbf{z})\geq \nu\right\}  =  \left\{ \mathbf{z}\in D:\ g(X(\mathbf{z})) = 1 \right\},
\end{equation}
with  $g(x)=1_{\nu}(x)$,  the indicator function based on threshold $\nu$.

In the Gaussian standard case, since $p(u)=\frac{1}{\sqrt{2\pi}}\exp(-u^{2}/2)$,  equation (\ref{ex-0}) leads to
$$1_{\nu}(x)=\sum_{k=0}^{\infty}\frac{G_{k}(\nu)}{k!}\mathcal{H}_{k}(x),$$
where $\{ \mathcal{H}_k, \ k \geq 0 \}$ denotes the basis of (non-normalized) Hermite polynomials, with
$$G_{k}(\nu)=\left\langle 1_{\nu}, \mathcal{H}_{k}\right\rangle_{ L^{2}((a,b), p(u)du)}=  \frac{1}{\sqrt{2\pi}}\exp(-\nu^{2}/2)\mathcal{H}_{k-1}(\nu),\quad k\geq 1,$$
and $G_{0}(\nu)=\frac{1}{\sqrt{2\pi}}\int_{\nu}^{\infty}\exp(-u^{2}/2)du= 1- \Phi(\nu)$ (the value of the decumulative normal distribution at $\nu$). Therefore,
$$M_{0}(\nu; X,D)=\sum_{k=0}^{\infty}\frac{G_{k}(\nu)}{k!}\int_{D}\mathcal{H}_{k}(X(\mathbf{y}))~d\mathbf{y}$$
\noindent (see Leonenko and Ruiz-Medina, 2022).

The following assumption on the large-scale behaviour of the correlation function $\gamma $ is considered:

\medskip

\noindent \textbf{Assumption I.}
\begin{eqnarray}
\gamma (\|\mathbf{x}-\mathbf{y}\|) &=&\mathcal{O}\left(\|\mathbf{x}-\mathbf{y}\|^{-\varrho }\right),\quad \mbox{as} \enspace \|\mathbf{x}-\mathbf{y}\|\to \infty,\quad \varrho \in (0,d).\label{lrd}
\end{eqnarray}

\subsection{Information-complexity measures}
\label{subsubsection:information-complexity}

Since the seminal paper by Shannon (1948), arisen in the context of communications, Information Theory  has extraordinarily grown as a fundamental scientific discipline, with a wide projection in many diverse fields of application. In particular, a variety of information and complexity measures have been proposed and thoroughly studied, with the aim of characterizing the uncertain behaviour inherent to random systems.

Although most concepts have a simpler interpretation in the case of systems with a finite number of states, in this preliminary introduction, and in the sequel, we directly refer to definitions for the continuous case, as is the object of this paper. Accordingly, for a continuous multivariate probability distribution with density function $\{f(\mathbf{x}): \mathbf{x} \in \mathbb{R}^n\}$, Shannon entropy (in this continuous case, also called `differential entropy') is defined as
$$ H(f) := - \int_{\mathbb{R}^n}\ln (f(\mathbf{x})) ~ f(\mathbf{x})~ d\mathbf{x} = E_{f}[-\ln (f)].$$
As is well known, the infimum and supremum of $H(f)$ are $-\infty$ and $+ \infty$, respectively. Shannon entropy satisfies `extensivity', \emph{i. e.} additivity for independent (sub)systems.

Among various generalizations of Shannon entropy,  Rényi entropy (Rényi, 1961), based on a deformation (distortion) parameter,  constitutes the most representative one under preservation of extensivity. For a continuous multivariate distribution with probability density function $\{f(\mathbf{x}): \mathbf{x} \in \mathbb{R}^n\}$, Rényi entropy of order $q$ is defined as
$$ H_q(f) :=  \frac{1}{1-q} \ln \left(\int_{\mathbb{R}^n} f^q(\mathbf{x})~ d\mathbf{x} \right) = \frac{1}{1-q}\ln\left( E_{f}[f^{q-1}]\right) \qquad (q\neq1). $$
As before, the infimum and supremum of $H_q(f)$ are $-\infty$ and $+ \infty$, respectively. Shannon entropy $H(f)$ is the limiting case of Rényi entropy $H_q(f)$ as $q\rightarrow 1$, hence also denoted as $H_1(f)$.

In particular, Rényi entropy constitutes the basis for the formal definition of the two-parameter generalized complexity measures proposed by López-Ruiz et al. (2009), given by
$$
C_{\alpha, \beta}(f) := e^{H_{\alpha}(f)-H_{\beta}(f)},
$$
for $0 < \alpha, \beta < \infty$.

Campbell (1968) justified the interpretation of  Shannon and Rényi entropies in exponential scale (both in the discrete and continuous cases) as an index of `diversity' or `extent' of a distribution. For the continuous case, the diversity index of order $q$,
$$
DI_{q}(f) = e^{H_q(f)},
$$
then varies between 0 and $+\infty$, and
$$
C_{\alpha, \beta}(f) = \frac{DI_{\alpha}(f)}{DI_{\beta}(f)},
$$
which leads to the interpretation of this concept of complexity in terms of sensitivity of the diversity index of order $q$ with respect to the deformation parameter; see Angulo et al. (2021).

Beyond the assessment on global uncertainty, divergence measures are defined for comparison of two given probability distributions at state level. As before, we here directly focus on versions for the continuous case. For two density functions $\left\{f(\mathbf{x}): \mathbf{x} \in \mathbf{R}^n \right\}$ and $\left\{g(\mathbf{x}): \mathbf{x} \in \mathbf{R}^n \right\}$, with $f$ being absolutely continuous with respect to $g$, Kullback and Leibler (1951), following the same conceptual approach leading to the definition of Shannon entropy, introduced the (directed)  divergence of  $f$ from $g$ as
\begin{align}
KL(f\|g) & := \int_{\mathbb{R}^d} f(\mathbf{x})~ \ln \left(\frac{f(\mathbf{x})}{g(\mathbf{x})} \right)~ d\mathbf{x} = E_{f}\left[\ln\left( \frac{f}{g}\right) \right],\nonumber
\end{align}
which, among other uses, has been widely adopted as a meaningful reference measure for inferential optimization purposes. Correspondingly, a generalization is given by Rényi (1961) divergence of order $q$, defined as, for  $q\neq1,$
\begin{align}
H_q(f\|g) & :=  \frac{1}{q-1} \ln \left(\int_{\mathbb{R}^d} f(\mathbf{x})~ \left( \frac{f(\mathbf{x})}{g(\mathbf{x})} \right)^{q-1} d\mathbf{x}\right)  = \frac{1}{q-1} \ln \left( E_{f}\left[\left( \frac{f}{g} \right)^{q-1}\right]\right).\nonumber
\end{align}
For $q \rightarrow 1$, $H_q(f\|g)$ tends to $KL(f\|g)$ (also denoted as $H_1(f\|g)$).

Angulo et al. (2021) proposed a natural formulation of a `relative diversity' index of order $q$ as
$$
DI_q(f\|g) = e^{H_q(f\|g)},
$$
meaning the structural departure of $f$ from $g$ in terms of the state-by-state probability contribution to diversity. This also gives a complementary interpretation, in terms of sensitivity with respect to the deformation parameter of Rényi divergence, for the two-parameter generalized relative complexity measure introduced by Romera et al. (2011):
$$
C_{\alpha, \beta}(f\|g) := e^{H_{\alpha}(f\|g)-H_{\beta}(f\|g)} = \frac{DI_{\alpha}(f\|g)}{DI_{\beta}(f\|g)},
$$
for $0 < \alpha, \beta < \infty$.

A further step, aiming at quantifying stochastic dependence between two random vectors, is achieved in terms of mutual information measures. From the point of view of departure from independence, divergence measures constitute, in particular, a direct instrumental approach, comparing the (true) joint distribution to the product of the corresponding marginal distributions (hypothetical case of independence). Thus, in the continuous case, for two random vectors $X\sim f_X$ and $Y\sim f_Y$, with $(X,Y)\sim f_{XY}$, the Rényi-divergence-based measure of mutual information of order $q$ is defined as
\begin{align}
 I_q(X,Y) & := H_q(f_{XY}\|f_X f_Y),
\end{align}
including the special case
\begin{align}
I(X,Y) & := KL(\bar{p}_{XY}\|\bar{p}_X \bar{p}_Y)=H(X) + H(Y) - H(X,Y)
\end{align}
(with the last equality not being similarly satisfied, in general, for $q \neq 1$). Related concepts and interpretations can be derived in relation to `mutual complexity' and `mutual diversity' (see, \emph{e. g.}, Alonso et al., 2016; Angulo et al., 2021).

These elements are applied in the next sections to studying, under the informational approach, the large-scale asymptotic behaviour of real and infinite-dimensional valued (for the spatio-temporal case) random fields of Lancaster-Sarmanov type.

\section{Methodology: mutual information dependence assessment}
\label{section:methodology-MI}

In this section,  we apply equation (\ref{lrd}) in the derivation of the  asymptotic order characterizing the spatial macroscale  behaviour of mutual information between the marginal spatial  components of  Lancaster-Sarmanov subordinated random fields.
Under \textbf{Assumption I},  this asymptotic order is related to the LRD parameter $\varrho$ of the underlying Lancaster-Sarmanov random field. Note that the lower values of $\varrho$ correspond to  higher asymptotic structural diversity loss. Such an asymptotic order is evaluated in Section \ref{subsection:AASMI}, in particular, for mutual information based on Shannon entropy, which corresponds to a limiting case of the asymptotic spatial macroscale order of mutual information based on R\'enyi entropy.

\subsection{Asymptotic analysis from Shannon mutual information}
\label{subsection:AASMI}

Let  $\{X(\mathbf{x}),\ \mathbf{x}\in \mathbb{R}^d\}$ be an element of  the Lancaster-Sarmanov random field class. From equation  (\ref{210bb}),  mutual information between  component r.v.'s  $X(\mathbf{x})$ and $X(\mathbf{y})$  can be computed as follows:
\begin{eqnarray}&&
I(X(\mathbf{x}),X(\mathbf{y}))=\int_{a}^{b} \int_{a}^{b} p(u,v,\|\mathbf{x}-\mathbf{y}\|)~
 \ln\left(\frac{p(u,v,\|\mathbf{x}-\mathbf{y}\|)}{p(u)~p(v)}\right)~dudv\nonumber\\
 &&=\int_{a}^{b} \int_{a}^{b} p(u)~p(v)~\left[ 1+\sum_{k=1}^{\infty }\gamma^{k}
(\|\mathbf{x}-\mathbf{y}\|)~e_{k}(u)~e_{k}(v)\right]~ \nonumber\\ &&\hspace*{1cm}\times  \ln\left(1+\sum_{k=1}^{\infty }\gamma^{k}
(\|\mathbf{x}-\mathbf{y}\|)~e_{k}(u)~e_{k}(v)\right)~dudv.
 \label{mie}
\end{eqnarray}
The following lemma shows that, under \textbf{Assumption I},  the asymptotic behavior  of $\mathcal{S}_{\varrho }(\|\mathbf{x}-\mathbf{y}\|) := I(X(\mathbf{x}),X(\mathbf{y}))$, when
  $\|\mathbf{x}-\mathbf{y}\|\to \infty $,  involves the  LRD parameter $\varrho$, thus providing an indicator of diversity loss  at macroscale,  with higher values attained  as $\varrho $ gets closer to 0.
\begin{lemma}
\label{lem}
Under \textbf{Assumption I}, the following asymptotic behavior holds:
\begin{equation}
\mathcal{S}_{\varrho }(\|\mathbf{x}-\mathbf{y}\|) = \mathcal{O}\left(\|\mathbf{x}-\mathbf{y}\|^{-\varrho }\right), \quad \mbox{as}\enspace \|\mathbf{x}-\mathbf{y}\|\to \infty.
\label{eqabmi}
\end{equation}

\end{lemma}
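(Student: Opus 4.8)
The plan is to expand the integrand in (\ref{mie}) around the independence case $\gamma=0$ and to control the whole expression by the squared correlation, exploiting that $\{e_{k}\}$ is orthonormal with $e_{0}\equiv 1$. Writing $r=\|\mathbf{x}-\mathbf{y}\|$, $\gamma=\gamma(r)$, and $D(u,v)=\sum_{k\geq 1}\gamma^{k}(r)\,e_{k}(u)e_{k}(v)$, the ratio of the joint density to the product of the marginals is exactly $1+D$, so that $\mathcal{S}_{\varrho}(r)=\int_{a}^{b}\int_{a}^{b}p(u)p(v)\,(1+D)\ln(1+D)\,du\,dv$. By \textbf{Assumption I}, $\gamma(r)\to 0$, hence $|\gamma(r)|<1$ for all sufficiently large $r$, and I would work in this regime throughout.

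First, since $\mathcal{S}_{\varrho}(r)$ is a Kullback--Leibler divergence it is nonnegative, which gives the trivial lower bound. For the upper bound I would invoke the elementary inequality $x\ln x\leq x^{2}-x$, valid for every $x>0$; applied with $x=1+D>0$ (positivity of the joint density a.e.) this yields the pointwise estimate $(1+D)\ln(1+D)\leq D+D^{2}$, reducing the problem to two moment computations in the product space $L^{2}((a,b)^{2},\,p\otimes p)$. The key step is that the linear contribution vanishes: because $e_{0}\equiv 1$ and $\langle e_{k},e_{0}\rangle=0$ for $k\geq 1$, one gets $\int\int p(u)p(v)\,D\,du\,dv=\sum_{k\geq 1}\gamma^{k}\langle e_{k},e_{0}\rangle^{2}=0$. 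The quadratic term is handled by orthonormality of $\{e_{k}\otimes e_{l}\}$, giving $\int\int p(u)p(v)\,D^{2}\,du\,dv=\sum_{k,l\geq 1}\gamma^{k+l}\langle e_{k},e_{l}\rangle^{2}=\sum_{k\geq 1}\gamma^{2k}=\gamma^{2}/(1-\gamma^{2})$. Combining, $0\leq\mathcal{S}_{\varrho}(r)\leq \gamma^{2}(r)/(1-\gamma^{2}(r))$, whence $\mathcal{S}_{\varrho}(r)=\mathcal{O}(\gamma^{2}(r))=\mathcal{O}(r^{-2\varrho})$, which in particular delivers the asserted $\mathcal{O}(r^{-\varrho})$. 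The sharper exponent $2\varrho$ is consistent with the Gaussian instance, where $\mathcal{S}_{\varrho}(r)=-\tfrac{1}{2}\ln(1-\gamma^{2}(r))\sim \tfrac{1}{2}\gamma^{2}(r)$.

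The main obstacle I anticipate is analytic rather than algebraic: justifying the termwise integrations, that is, the interchange of the infinite sums defining $D$ and $D^{2}$ with the double integral. I would secure this through the $L^{2}(p\otimes p)$ convergence of the expansion (\ref{210bb}) together with the geometric summability $\sum_{k}\gamma^{2k}<\infty$ guaranteed by $|\gamma|<1$; monotone/dominated convergence then legitimises the two moment identities, while the a.e.\ positivity of $1+D$ and the finiteness of $\mathcal{S}_{\varrho}(r)$ as a mutual information make the pointwise inequality usable under the integral sign. A robust alternative that reaches the stated exponent $\varrho$ directly, without relying on the exact sign cancellation of the linear term, is to bound $\int\int p(u)p(v)\,|D|\,du\,dv\leq \|D\|_{L^{2}(p\otimes p)}=\mathcal{O}(|\gamma(r)|)=\mathcal{O}(r^{-\varrho})$ by Cauchy--Schwarz after a first-order control of $(1+D)\ln(1+D)$ near $D=0$.
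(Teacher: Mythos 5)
Your proof is correct, and it takes a genuinely different route from the paper's. The paper argues via a formal first-order Taylor substitution $\ln(1+D)\simeq D$ (writing, as you do, $D(u,v)=\sum_{k\geq 1}\gamma^{k}(r)e_{k}(u)e_{k}(v)$) together with an expansion of the marginal density itself, $p=\sum_{k}C_{k}^{p}e_{k}$; its leading contribution is then the linear term, evaluated there as $\sum_{j\geq 1}\gamma^{j}[C_{j}^{p}]^{2}$, from which matching upper and lower bounds of order $\gamma(r)\sim r^{-\varrho}$ are read off. You instead use the exact pointwise inequality $x\ln x\leq x^{2}-x$ (valid for all $x>0$, so no smallness of $D$ and no informal $\simeq$ is needed), observe that the linear term vanishes identically, since $\int_{a}^{b}\int_{a}^{b}p(u)p(v)D(u,v)\,du\,dv=\sum_{k\geq 1}\gamma^{k}\left(E_{p}[e_{k}]\right)^{2}$ and $E_{p}[e_{k}]=\left\langle e_{k},e_{0}\right\rangle_{L^{2}((a,b),p(u)du)}=0$ for $k\geq 1$, and control the quadratic term by Parseval, getting $0\leq\mathcal{S}_{\varrho}(r)\leq\gamma^{2}(r)/(1-\gamma^{2}(r))=\mathcal{O}(r^{-2\varrho})$, which implies the stated $\mathcal{O}(r^{-\varrho})$ and is strictly sharper. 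Two points are worth stressing. First, your cancellation is the correct computation: the paper's nonzero evaluation of the linear term implicitly treats $\int e_{k}(u)e_{j}(u)\,du$ as $\delta_{kj}$, whereas orthonormality of $\{e_{k}\}$ holds with respect to the weighted measure $p(u)du$, not $du$; with the correct weight that term is exactly zero, and your exponent $2\varrho$ is corroborated by the Gaussian closed form $-\tfrac{1}{2}\ln(1-\gamma^{2}(r))\sim\gamma^{2}(r)/2$. Consequently the exact-order (two-sided) reading suggested by the paper's lower bound, which rests on that nonvanishing linear term, cannot coexist with your upper bound when $\gamma$ is eventually positive; what survives is precisely the big-O claim of the lemma, for which non-negativity of mutual information is all the lower control required. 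Second, your treatment of the analytic issue is adequate: termwise integration is justified by $L^{2}(p\otimes p)$ convergence of the kernel and Parseval, and your closing Cauchy--Schwarz remark, $\int_{a}^{b}\int_{a}^{b}p(u)p(v)|D(u,v)|\,du\,dv\leq\|D\|_{L^{2}(p\otimes p)}=\mathcal{O}(\gamma(r))$ combined with $(1+D)\ln(1+D)\leq D+D^{2}$, gives the weaker rate $\mathcal{O}(r^{-\varrho})$ directly without invoking the cancellation, so the result is robust either way.
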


\begin{proof}
Note that $p(u)\in L^{2}((a,b), p(u)du)$  admits the series expansion
$$p(u)=\sum_{k=0}^{\infty}C_{k}^{p}e_{k}(u),\quad \mbox{with} \quad C_{k}^{p}=\int_{a}^{b}[p(u)]^{2}~e_{k}(u)~du,\quad k\geq 0.
$$
From equation  (\ref{mie}), applying Taylor series expansion of logarithmic function at a neighborhood of 1, keeping in mind that
$\sum_{k=0}^{\infty}\left[C_{k}^{p}\right]^{2}<\infty,$
 we obtain
\begin{eqnarray}&&\mathcal{S}_{\varrho }(\|\mathbf{x}-\mathbf{y}\|)=
I(X(\mathbf{x}),X(\mathbf{y})) \simeq  \int_{a}^{b}\int_{a}^{b}\left[\sum_{k=0}^{\infty}C_{k}^{p}~e_{k}(u)\right]\left[\sum_{l=0}^{\infty}C_{l}^{p}~e_{l}(v)\right]\nonumber\\
&& \times \left[ 1+\sum_{i=1}^{\infty }\gamma^{i}
(\|\mathbf{x}-\mathbf{y}\|)~e_{i}(u)~e_{i}(v)\right]
\left[\sum_{j=1}^{\infty }\gamma^{j}
(\|\mathbf{x}-\mathbf{y}\|)~e_{j}(u)~e_{j}(v)\right]~dudv\nonumber\\
&&=\sum_{j=1}^{\infty}\gamma^{j}
(\|\mathbf{x}-\mathbf{y}\|)\left[C_{j}^{p}\right]^{2}+\sum_{i=1}^{\infty}\sum_{j=1}^{\infty}\gamma^{i+j}
(\|\mathbf{x}-\mathbf{y}\|)\nonumber\\ &&\hspace*{1cm}\times \int_{a}^{b}\int_{a}^{b}~e_{i}(u)~e_{i}(v)~e_{j}(u)~e_{j}(v)~p(u)~p(v)~dudv\nonumber\\
&&=\sum_{j=1}^{\infty}\gamma^{j}
(\|\mathbf{x}-\mathbf{y}\|)\left[C_{j}^{p}\right]^{2}+\sum_{i=1}^{\infty}\sum_{j=1}^{\infty}\gamma^{i+j}
(\|\mathbf{x}-\mathbf{y}\|)~\delta_{i,j} \nonumber\\
&& = \ \sum_{j=1}^{\infty}\gamma^{j}
(\|\mathbf{x}-\mathbf{y}\|)\left[C_{j}^{p}\right]^{2}+\sum_{i=1}^{\infty}\gamma^{2i}(\|\mathbf{x}-\mathbf{y}\|)\nonumber\\ &\leq & \left( \sup_{j\geq 1}\left[C_{j}^{p}\right]^{2}\right) \sum_{j=1}^{\infty}\gamma^{j}
(\|\mathbf{x}-\mathbf{y}\|)+\sum_{i=1}^{\infty}\gamma^{2i}(\|\mathbf{x}-\mathbf{y}\|) \nonumber\\
&& = \ \left(\sup_{j\geq 1}\left[C_{j}^{p}\right]^{2}\right) \frac{\gamma (\|\mathbf{x}-\mathbf{y}\|)}{1-\gamma (\|\mathbf{x}-\mathbf{y}\|)}+
\frac{\gamma^{2} (\|\mathbf{x}-\mathbf{y}\|)}{1-\gamma^{2} (\|\mathbf{x}-\mathbf{y}\|)}
\nonumber\\
&& =
\mathcal{O}\left(\|\mathbf{x}-\mathbf{y}\|^{-\varrho }\right),\quad \mbox{as}\enspace \|\mathbf{x}-\mathbf{y}\|\to \infty.
\nonumber\\
\label{ord}
\end{eqnarray}

Similarly,  as  $\|\mathbf{x}-\mathbf{y}\|\to \infty,$
\begin{eqnarray}\mathcal{S}_{\varrho }(\|\mathbf{x}-\mathbf{y}\|)&=&
I(X(\mathbf{x}),X(\mathbf{y})) \simeq\sum_{j=1}^{\infty}\gamma^{j}
(\|\mathbf{x}-\mathbf{y}\|)\left[C_{j}^{p}\right]^{2}+\sum_{i=1}^{\infty}\gamma^{2i}(\|\mathbf{x}-\mathbf{y}\|)\nonumber\\
& \geq & \sum_{j=1}^{\infty}\gamma^{j}
(\|\mathbf{x}-\mathbf{y}\|)\left[C_{j}^{p}\right]^{2}\nonumber\\
& \geq & \left(\inf_{j\geq 1}\left[C_{j}^{p}\right]^{2}\right)\sum_{j=1}^{\infty}\gamma^{j}
(\|\mathbf{x}-\mathbf{y}\|) =
\mathcal{O}\left(\|\mathbf{x}-\mathbf{y}\|^{-\varrho }\right),
\nonumber\\
\label{ordb}
\end{eqnarray}
\noindent where we have applied that the  rank  of  function $p\in L^{2}((a,b), p(u)du)$ is equal to 1.
\end{proof}

    For $g\in L^{2}((a,b), p(u)du)$, a similar asymptotic behavior is displayed by mutual information
   $I(g(X(\mathbf{x})),g(X(\mathbf{y})))$ when
  $\|\mathbf{x}-\mathbf{y}\|\to \infty $,
  involving the LRD parameter $\varrho$ scaled by the rank $m$ of function $g$ in the orthonormal basis $\{e_{k},\ k\geq 0\}$ (Hermite and Laguerre ranks in the Gaussian and Gamma-correlated cases, respectively).   This fact is proved in the following result.

 \begin{theorem} \label{th1}  Let   $p(u)$ be, as before, the  probability density  characterizing  the marginal probability distributions of the   \emph{Lancaster-Sarmanov} random field   $X=\{ X(\mathbf{z}),\ \mathbf{z}\in \mathbb{R}^{d}\}$.
  Assume that   $g\in L^{2}((a,b), p(u)du)$ has rank $m$, and is such that   $g(X(\mathbf{z}))$    is a discrete random variable whose state space is finite (with cardinal $N$), for $\mathbf{z}\in \mathbb{R}^{d}.$   The following asymptotic behavior  then holds:
   \begin{equation} I\left(g(X(\mathbf{x})),g(X(\mathbf{y}))\right)
  = \mathcal{O}\left(\|\mathbf{x}-\mathbf{y}\|^{-m\varrho }\right),\quad \mbox{as} \enspace  \|\mathbf{x}-\mathbf{y}\|\to \infty.
\label{eqabmi2}
\end{equation}
  \end{theorem}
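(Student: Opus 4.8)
The plan is to transfer the continuous computation of Lemma~\ref{lem} to the discrete setting forced by the hypothesis on $g$. Since $g(X(\mathbf{z}))$ takes values in a finite set $\{s_1,\dots,s_N\}$, I would first introduce the level sets $A_i=\{u\in(a,b):\ g(u)=s_i\}$, which partition $(a,b)$ up to a $p$-null set, and set $P_i=\int_{A_i}p(u)\,du$ together with $P_{ij}(r)=\int_{A_i}\int_{A_j}p(u,v,r)\,du\,dv$, where $r=\|\mathbf{x}-\mathbf{y}\|$. The mutual information to be estimated is then the finite sum
$$
I\bigl(g(X(\mathbf{x})),g(X(\mathbf{y}))\bigr)=\sum_{i,j=1}^{N}P_{ij}(r)\,\ln\!\frac{P_{ij}(r)}{P_iP_j}.
$$

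Next I would substitute the Lancaster--Sarmanov expansion (\ref{210bb}) into $P_{ij}(r)$. Writing $a_{i,k}=\int_{A_i}e_k(u)\,p(u)\,du$, so that $a_{i,0}=P_i$ and, by orthonormality against $e_0\equiv1$, $\sum_{i=1}^{N}a_{i,k}=\delta_{k,0}$, this yields the discrete analogue of (\ref{mie}),
$$
P_{ij}(r)=P_iP_j+\sum_{k=1}^{\infty}\gamma^{k}(r)\,a_{i,k}\,a_{j,k}.
$$
The decisive structural input is to read off the rank of $g$ from these coefficients: since $g=\sum_{i}s_i\mathbf{1}_{A_i}$ gives $C_k^{g}=\sum_{i}s_i\,a_{i,k}$, the rank-$m$ hypothesis $C_k^{g}=0$ for $1\le k\le m-1$ (cf.\ (\ref{ex})) is to be used to discard the contributions with $k<m$, leaving $P_{ij}(r)-P_iP_j=\sum_{k\ge m}\gamma^{k}(r)\,a_{i,k}\,a_{j,k}=\mathcal{O}\bigl(\gamma^{m}(r)\bigr)$ uniformly in $(i,j)$.

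I would then linearise the logarithm about $1$, which is legitimate because $\gamma(r)\to0$ under \textbf{Assumption~I}: setting $\eta_{ij}(r)=\bigl(P_{ij}(r)-P_iP_j\bigr)/(P_iP_j)=\mathcal{O}(\gamma^{m}(r))$, each summand equals $P_{ij}(r)\ln\!\bigl(1+\eta_{ij}(r)\bigr)$, and $\ln(1+\eta)=\mathcal{O}(\eta)$ together with the boundedness of the $N^2$ factors $P_{ij}(r)$ gives $I\bigl(g(X(\mathbf{x})),g(X(\mathbf{y}))\bigr)=\mathcal{O}(\gamma^{m}(r))$, exactly as in the chain of estimates (\ref{ord}). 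Finiteness of the tail is guaranteed by Bessel's inequality $\sum_{k}a_{i,k}^2\le P_i<\infty$ and by a geometric bound on $\gamma(r)<1$. Applying (\ref{lrd}), $\gamma^{m}(r)=\mathcal{O}(r^{-m\varrho})$, which is the asserted (\ref{eqabmi2}).

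The step I expect to be the main obstacle is the passage from the rank of $g$ to the order of $P_{ij}(r)-P_iP_j$. The rank condition $\sum_i s_i a_{i,k}=0$ for $k<m$ controls only one linear combination of the coefficient vectors $(a_{1,k},\dots,a_{N,k})$, whereas the products $a_{i,k}a_{j,k}$ appearing in $P_{ij}(r)$ require control of the vectors themselves. In the binary case $N=2$ --- which covers the motivating Minkowski-functional example $g=\mathbf{1}_\nu$ --- this is immediate, since $a_{2,k}=-a_{1,k}$ for $k\ge1$ forces $a_{i,k}=0$ for all $k<m$ as soon as $C_k^{g}=(s_1-s_2)a_{1,k}=0$; I would therefore settle this case cleanly first, and then handle general $N$ by identifying $m$ with the least $k\ge1$ for which the coefficient vector $(a_{1,k},\dots,a_{N,k})$ is nonzero, so that only the components with $k\ge m$ survive the $(i,j)$-summation and the rank genuinely rescales the exponent $\varrho$ by $m$.
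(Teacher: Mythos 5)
Your proposal follows essentially the same route as the paper's own proof: both write $I\left(g(X(\mathbf{x})),g(X(\mathbf{y}))\right)$ as a finite sum over the $N^{2}$ pairs of states, express the joint probabilities by integrating the expansion (\ref{210bb}) over products of level sets of $g$, invoke the rank to start the surviving series at order $m$, and then read off the order $\gamma^{m}(\|\mathbf{x}-\mathbf{y}\|)=\mathcal{O}(\|\mathbf{x}-\mathbf{y}\|^{-m\varrho})$. The essential difference is that you isolate, and for $N=2$ actually prove, the step the paper silently assumes: in (\ref{abth1}) the expansion restricted to $g^{-1}(g_{i})\times g^{-1}(g_{j})$ is simply written with the sum starting at $h=m$, which amounts to asserting that rank $m$ of $g$ forces all level-set coefficients $a_{i,k}=\int_{g^{-1}(g_{i})}e_{k}(u)\,p(u)\,du$ to vanish for $1\leq k\leq m-1$. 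Your diagnosis that the rank condition controls only the single linear combination $\sum_{i}s_{i}a_{i,k}$ is exactly right: for $N=2$ the constraint $\sum_{i}a_{i,k}=0$ makes the two statements equivalent (your argument, which covers the motivating indicator/Minkowski case), but for $N\geq 3$ cancellation in $\sum_{i}s_{i}a_{i,k}$ can occur with nonvanishing $a_{i,k}$. Be aware, though, that your proposed repair for general $N$ (taking $m$ to be the least $k$ with $(a_{1,k},\dots,a_{N,k})\neq 0$) proves a corrected statement, not the stated theorem: with the paper's definition of rank the claimed bound can genuinely fail. For instance, with $N=4$ states one can choose values $(s_{1},\dots,s_{4})$, non-constant, with $C_{1}^{g}=C_{2}^{g}=0$ while some $a_{i,1}\neq 0$; then the joint probabilities carry a first-order term in $\gamma$, the mutual information is of exact order $\gamma^{2}$, and for $\gamma(r)\asymp r^{-\varrho}$ this is not $\mathcal{O}(r^{-m\varrho})$ for rank $m\geq 3$. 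So the residual gap you flag is a defect of the theorem (and of the paper's proof), not of your reasoning.

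Two further points of comparison. First, your treatment of the limit is sounder than the paper's: you put the decay where it belongs, in $\ln(1+\eta_{ij}(r))=\mathcal{O}(\eta_{ij}(r))$ uniformly over the $N^{2}$ pairs, with Bessel and Cauchy--Schwarz controlling the tail of $\sum_{k\geq m}\gamma^{k}a_{i,k}a_{j,k}$. The paper instead pulls $\sup_{i,j}\ln\left[\cdot\right]$ out as if it were a constant and attributes the decay to the bracket $N^{2}\left[\gamma^{0}+\sum_{h\geq m}\gamma^{h}\right]$, which cannot work since $\gamma^{0}\equiv 1$ makes that bracket $\mathcal{O}(1)$ (its intermediate lines are also vacuous because $E_{p}[e_{h}]=0$ for $h\geq 1$); the needed estimate of the logarithmic factor is never carried out in (\ref{abth1}), and it is precisely your linearization step. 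Likewise, the paper's matching lower bound (\ref{abth1b}) rests on the claim $\inf_{h\geq m}E_{p}[1_{\mathcal{S}}e_{h}]>0$, which is impossible since $\langle 1_{\mathcal{S}},e_{h}\rangle_{L^{2}((a,b),p(u)du)}\to 0$ as $h\to\infty$; since the statement only asserts an upper bound, you rightly dispense with a lower bound. Second, your own expansion shows the claimed rate is not tight: because $\sum_{i,j}\left(P_{ij}(r)-P_{i}P_{j}\right)=0$, the first-order term cancels in the sum and the discrete mutual information is in fact of order $\gamma^{2m}(r)=\mathcal{O}(r^{-2m\varrho})$, a sharpening your method yields at no extra cost.
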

  \begin{remark}
Note that applying  variable change  theorem, equation (\ref{eqabmi2}) is  also satisfied  in the case where, for every  $\mathbf{z}\in \mathbb{R}^{d},$  $g(X(\mathbf{z}))$ is a continuous random variable  and $g$ admits an inverse function $g^{-1}$ having  non-null derivatives over $g((a,b)).$ Identity (\ref{eqabmi2}) also holds when $g$ is non injective but  a countable set  $\{g_{k}^{-1}(\mathbf{y}),\ k\in \mathbb{N}\}$ of preimages is associated  with every point $\mathbf{y}\in g((a,b)),$   with $g_{k}^{-1}$ having  non-null derivatives over $g((a,b)),$  for $k\in \mathbb{N}.$
\end{remark}

  \begin{proof} Under \textbf{Assumption I}, applying Jensen's  inequality, since $\{e_{k},\ k\geq 0\}$ is an  orthonormal basis of the space
$L^{2}((a,b), p(u)du),$
  \begin{eqnarray}&&I\left(g(X(\mathbf{x})),g(X(\mathbf{y}))\right)
   =  \sum_{i=1}^{N}\sum_{j= 1 }^{N}\ln\left[\frac{p_{g(X(\mathbf{x})),g(X(\mathbf{y}))}(g_{i},g_{j})}{p_{g(X(\mathbf{x}))}(g_{i})~p_{g(X(\mathbf{y}))}(g_{j})}
    \right]\nonumber\\
    && \times \int_{g^{-1}(g_{i})\times g^{-1}(g_{j})}p(u)~p(v) ~\left[ 1+\sum_{h=m}^{\infty }\gamma^{h}
(\|\mathbf{x}-\mathbf{y}\|)~e_{h}(u)~e_{h}(v)\right]~dudv\nonumber\\
    && \leq  \sup_{i,j} \left(\ln\left[\frac{p_{g(X(\mathbf{x})),g(X(\mathbf{y}))}(g_{i},g_{j})}{p_{g(X(\mathbf{x}))}(g_{i})~p_{g(X(\mathbf{y}))}(g_{j})}
    \right]\right)\nonumber\\
&&\times \enspace N^{2}\left[ 1 +\sum_{h=m}^{\infty }\gamma^{h}
(\|\mathbf{x}-\mathbf{y}\|)\int_{a}^{b}\int_{a}^{b} p(u)~p(v)~
e_{h}(u)~e_{h}(v)~dudv\right]
\nonumber\\
&& =  \sup_{i,j}\left(\ln\left[\frac{p_{g(X(\mathbf{x})),g(X(\mathbf{y}))}(g_{i},g_{j})}{p_{g(X(\mathbf{x}))}(g_{i})~p_{g(X(\mathbf{y}))}(g_{j})}
    \right]\right)\nonumber\end{eqnarray}
    \begin{eqnarray}
&&\hspace*{1cm}\times  N^{2}\left[ 1 +\sum_{h=m}^{\infty }\gamma^{h}
(\|\mathbf{x}-\mathbf{y}\|)\left(E_{p}\left[e_{h}(\cdot)\right]\right)^{2}\right]
\nonumber\\
& & \leq \sup_{i,j}\left(\ln\left[\frac{p_{g(X(\mathbf{x})),g(X(\mathbf{y}))}(g_{i},g_{j})}{p_{g(X(\mathbf{x}))}(g_{i})~p_{g(X(\mathbf{y}))}(g_{j})}
    \right]\right)\nonumber\\
&&\hspace*{1cm}\times N^{2}\left[ 1 +\sum_{h=m}^{\infty }\gamma^{h}
(\|\mathbf{x}-\mathbf{y}\|)\|e_{h}\|_{L^{2}((a,b), p(u)du)}^{2}\right]\nonumber\\
& &=  \sup_{i,j}\left(\ln\left[\frac{p_{g(X(\mathbf{x})),g(X(\mathbf{y}))}(g_{i},g_{j})}{p_{g(X(\mathbf{x}))}(g_{i})~p_{g(X(\mathbf{y}))}(g_{j})}
    \right]\right)\nonumber\\
&&\hspace*{1cm}\times N^{2}\left[ \gamma^{0}
(\|\mathbf{x}-\mathbf{y}\|) +\sum_{h=m}^{\infty }\gamma^{h}
(\|\mathbf{x}-\mathbf{y}\|)\right]\nonumber\\
&& =
\mathcal{O}\left(\|\mathbf{x}-\mathbf{y}\|^{-m\varrho }\right),\quad \mbox{as} \enspace \|\mathbf{x}-\mathbf{y}\|\to \infty.
\label{abth1}
\end{eqnarray}
\noindent
From (\ref{210bb}), the following  lower bound  is obtained:
 \begin{eqnarray}&&I\left(g(X(\mathbf{x})),g(X(\mathbf{y}))\right)
   =  \sum_{i=1}^{N}\sum_{j= 1 }^{N}\ln\left[\frac{p_{g(X(\mathbf{x})),g(X(\mathbf{y}))}(g_{i},g_{j})}{p_{g(X(\mathbf{x}))}(g_{i})~p_{g(X(\mathbf{y}))}(g_{j})}
    \right]\nonumber\\
    &&\times \int_{g^{-1}(g_{i})\times g^{-1}(g_{j})}p(u)~p(v)~\left[ 1+\sum_{h=m}^{\infty }\gamma^{h}
(\|\mathbf{x}-\mathbf{y}\|)~e_{h}(u)~e_{h}(v)\right]~dudv\nonumber\\
   & & \geq  \inf_{i,j}\left(\ln\left[\frac{p_{g(X(\mathbf{x})),g(X(\mathbf{y}))}(g_{i},g_{j})}{p_{g(X(\mathbf{x}))}(g_{i})~p_{g(X(\mathbf{y}))}(g_{j})}
    \right]\right)\nonumber\\
&&\times \enspace N^{2}\left[ 1 + \left[\sum_{h=m}^{\infty }\gamma^{h}
(\|\mathbf{x}-\mathbf{y}\|)\right]
\int_{\mathcal{S}\times \mathcal{S}}e_{h}(u)~e_{h}(v)~p(u)~p(v)~dudv\right]
\nonumber\\
&& =  \inf_{i,j}\left(\ln\left[\frac{p_{g(X(\mathbf{x})),g(X(\mathbf{y}))}(g_{i},g_{j})}{p_{g(X(\mathbf{x}))}(g_{i})~p_{g(X(\mathbf{y}))}(g_{j})}
    \right]\right)\nonumber\\
&&\times \enspace N^{2}\left[ \left(E_{p}[1_{\mathcal{S}}e_{h}(\cdot)]\right)^{2} + \left[\sum_{h=m}^{\infty }\gamma^{h}
(\|\mathbf{x}-\mathbf{y}\|)\right]\left(E_{p}[1_{\mathcal{S}}e_{h}(\cdot)]\right)^{2}\right]
\nonumber\\
& &\geq  \inf_{i,j}\left(\ln\left[\frac{p_{g(X(\mathbf{x})),g(X(\mathbf{y}))}(g_{i},g_{j})}{p_{g(X(\mathbf{x}))}(g_{i})~p_{g(X(\mathbf{y}))}(g_{j})}
    \right]\right)\nonumber\\
&&\times \enspace \left[N\left(\inf_{h\geq m}E_{p}[1_{\mathcal{S}}e_{h}(\cdot)]\right)\right]^{2}\left[ \gamma^{0}
(\|\mathbf{x}-\mathbf{y}\|)+ \sum_{h=m}^{\infty }\gamma^{h}
(\|\mathbf{x}-\mathbf{y}\|)\right]
\nonumber\\
& & = \mathcal{O}\left(\|\mathbf{x}-\mathbf{y}\|^{-m\varrho }\right),\quad \mbox{as} \enspace \|\mathbf{x}-\mathbf{y}\|\to \infty,
\label{abth1b}
\end{eqnarray}
\noindent where  $\mathcal{S}\subseteq (a,b)$ satisfies $p(\mathcal{S})=\inf_{i=1,\dots ,N} p(g^{-1}(g_{i}))>0 ,$  and in the last inequality we have applied that $1_{\mathcal{S}}\in  L^{2}((a,b), p(u)du)$  with $\|1_{\mathcal{S}}\|_{L^{2}((a,b), p(u)du)}>0,$  and
$E_{p}[1_{\mathcal{S}}e_{m}(\cdot)]>0$, $\forall m$, hence $\inf_{h\geq m}E_{p}[1_{\mathcal{S}}e_{h}(\cdot)]>0.$
 From  (\ref{abth1}) and (\ref{abth1b}), equation (\ref{eqabmi2}) holds.
  \end{proof}

\subsection{Asymptotic analysis from R\'enyi mutual information}
\label{subsection:AARMI}

  This section extends the asymptotic analysis derived in the previous section beyond the special limit case based on Shannon entropy to the framework of R\'enyi mutual information. Hence, several interpretations arise in terms Campbell's diversity indices for generalized complexity  measures.

From equation  (\ref{210bb}),  for any $\mathbf{x},\mathbf{y}\in \mathbb{R}^d$, R\'enyi mutual information between  random components $X(\mathbf{x})$ and $X(\mathbf{y})$ of random field $X$ in the Lancaster-Sarmanov class is given by
\begin{eqnarray}
&& I_{q}(X(\mathbf{x}),X(\mathbf{y}))=\frac{1}{q-1}\ln\left(\int_{a}^{b}\int_{a}^{b}\left[\frac{p(u,v,\|\mathbf{x}-\mathbf{y}\|)}{p(u)~p(v)}\right]^{q-1}
\right.\nonumber\\ &&\hspace*{5cm}\times \left.p(u,v,\|\mathbf{x}-\mathbf{y}\|)~dudv\right)
\label{rmi}
\end{eqnarray}
\begin{theorem}
\label{th2} Under  the conditions assumed in Lemma \ref{lem}, if
 \begin{equation}\sup_{k_{1},\dots, k_{q}\in \mathbb{N}^{q}}\left(E_{p}\left[\prod_{i=1}^{q}e_{k_{i}}\right]\right)^{2}<\infty ,\label{cth2}
\end{equation}
\noindent then,
\begin{equation*}
I_{q}(X(\mathbf{x}),X(\mathbf{y}))=
\mathcal{O}\left(\|\mathbf{x}-\mathbf{y}\|^{-q\varrho }\right),\quad \mbox{as} \enspace \|\mathbf{x}-\mathbf{y}\|\to \infty.\end{equation*}
Here, as before, $E_{p}[\cdot]$ denotes the expectation with respect to the marginal probability density $p.$
\end{theorem}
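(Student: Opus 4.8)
The plan is to carry the $q$-th power through the Lancaster-Sarmanov expansion (\ref{210bb}), in parallel with Lemma \ref{lem} and Theorem \ref{th1}. Writing $r=\|\mathbf{x}-\mathbf{y}\|$ and
$$W(u,v,r)=\sum_{k=1}^{\infty}\gamma^{k}(r)\,e_{k}(u)\,e_{k}(v),$$
equation (\ref{210bb}) gives $p(u,v,r)/[p(u)p(v)]=1+W$ and $p(u,v,r)=p(u)\,p(v)\,(1+W)$, so that (\ref{rmi}) reads
$$I_{q}(X(\mathbf{x}),X(\mathbf{y}))=\frac{1}{q-1}\ln\!\left(\int_{a}^{b}\!\int_{a}^{b}(1+W)^{q}\,p(u)\,p(v)\,du\,dv\right).$$
Under \textbf{Assumption I} one has $\gamma(r)\to0$, hence $W\to0$ and the argument of the logarithm tends to $1$; since $\ln(1+t)\sim t$ as $t\to0$, the problem reduces to determining the exact $\gamma$-order of $\int_{a}^{b}\int_{a}^{b}(1+W)^{q}p(u)p(v)\,du\,dv-1$.

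First I would expand $(1+W)^{q}$ by the binomial theorem and integrate term by term. Because $\{e_{k},\ k\geq0\}$ is orthonormal in $L^{2}((a,b),p(u)du)$ with $e_{0}\equiv1$ (so that $E_{p}[e_{k}]=\delta_{k,0}$), factorisation of the $u$- and $v$-integrals gives
$$\int_{a}^{b}\!\int_{a}^{b}W^{n}\,p(u)\,p(v)\,du\,dv=\sum_{k_{1},\dots,k_{n}\geq1}\gamma^{k_{1}+\cdots+k_{n}}(r)\,\Big(E_{p}\big[\textstyle\prod_{i=1}^{n}e_{k_{i}}\big]\Big)^{2}.$$
Condition (\ref{cth2}) is precisely what makes these multi-index series tractable: the uniform bound on the $q$-fold product expectations $E_{p}[\prod_{i}e_{k_{i}}]$ secures absolute convergence and legitimises the interchange of summation and integration throughout the expansion. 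In each summand the lowest power of $\gamma$ arises when all indices equal $1$, contributing $\gamma^{n}(r)\,(E_{p}[e_{1}^{n}])^{2}$; note that for $n=1$ this coefficient vanishes, since $E_{p}[e_{1}]=0$.

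Next I would isolate the leading order across the finite expansion $n=0,\dots,q$ and match it against the claimed rate. Collecting the minimal-index contributions, the surviving dominant term is of order $\gamma^{q}(r)$, which under \textbf{Assumption I} is $\mathcal{O}(r^{-q\varrho})$; passing this back through $\ln(1+t)\sim t$ and the factor $1/(q-1)$ then yields the upper bound $I_{q}=\mathcal{O}(r^{-q\varrho})$. A matching lower bound would be obtained exactly as in Theorem \ref{th1}, by bounding the surviving diagonal contribution from below through the strict positivity of the relevant $q$-fold product expectation, thus pinning the order down on both sides.

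The step I expect to be the main obstacle is this leading-order identification. One must control, uniformly in $r$ as $r\to\infty$, the interplay of the intermediate binomial terms $n=2,\dots,q-1$, each of which contributes a power $\gamma^{n}(r)$ through its minimal-index term, and show that after summation over the full multi-index families these are subsumed at the claimed order $\gamma^{q}(r)$. This is the delicate point of the argument, and it is here that the full strength of (\ref{cth2}), together with the orthonormality cancellations encoded in $(E_{p}[\prod_{i}e_{k_{i}}])^{2}$, must be brought to bear. Once this uniform control and the term-by-term integration are justified, the remaining work---summing the geometric-type series in $\gamma(r)$ and invoking \textbf{Assumption I}---is routine.
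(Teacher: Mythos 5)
You follow essentially the same route as the paper: expand the $q$-th power inside the logarithm of (\ref{rmi}), reduce via orthonormality to multi-index sums $\sum_{k_{1},\dots,k_{q}}\gamma^{k_{1}+\cdots+k_{q}}\left(E_{p}\left[\prod_{i}e_{k_{i}}\right]\right)^{2}$, linearize the logarithm, and bound above and below by the sup/inf of the product expectations times the geometric-type series $\left[\gamma/(1-\gamma)\right]^{q}$. However, your proof is not complete: the step you yourself flag as ``the main obstacle'' --- showing that the intermediate binomial terms $n=2,\dots,q-1$ are subsumed at order $\gamma^{q}(r)$ --- is a genuine gap, and in fact it cannot be closed as stated. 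By orthonormality, the $n=2$ binomial term equals $\binom{q}{2}\sum_{k_{1},k_{2}\geq 1}\gamma^{k_{1}+k_{2}}(r)\left(E_{p}[e_{k_{1}}e_{k_{2}}]\right)^{2}=\binom{q}{2}\sum_{k\geq 1}\gamma^{2k}(r)=\binom{q}{2}\,\gamma^{2}(r)/(1-\gamma^{2}(r))$, which is strictly positive and of exact order $\gamma^{2}(r)\sim r^{-2\varrho}$. For $q>2$ this dominates $r^{-q\varrho}$, so the binomial expansion, honestly carried out, yields only $I_{q}=\mathcal{O}\left(r^{-2\varrho}\right)$, not the claimed $\mathcal{O}\left(r^{-q\varrho}\right)$; and since this term is a positive lower-order contribution, no amount of uniform control via (\ref{cth2}) (which only bounds coefficients from above) can make it disappear.

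It is worth seeing how the paper negotiates this same point: the second line of its display asserts $\int_{a}^{b}\int_{a}^{b}\left[1+\mathcal{Q}_{r}(u,v)\right]^{q}p(u)p(v)\,du\,dv = 1+\int_{a}^{b}\int_{a}^{b}|\mathcal{Q}_{r}(u,v)|^{q}p(u)p(v)\,du\,dv$, i.e.\ it retains only the $n=0$ and $n=q$ terms of the expansion, with no justification. That identity is exact for $q=2$ (the linear term integrates to zero, as you correctly note), but it fails for integer $q\geq 3$ by the computation above. So your instinct about where the delicate point lies is exactly right; the difference is that the paper silently steps over it, whereas you leave it explicitly open. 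Either way, this method actually establishes the stated rate only for $q=2$: for larger integer $q$ your proposal is incomplete by your own admission, and the paper's argument rests on an identity that does not hold.
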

\begin{proof}
Under  the conditions of Lemma \ref{lem}, from equations (\ref{210bb}) and  (\ref{rmi}),  for any $\mathbf{x},\mathbf{y}\in \mathbb{R}^d$,
\begin{eqnarray}&&
I_{q}(X(\mathbf{x}),X(\mathbf{y}))  =  \frac{1}{q-1}\ln\left(\int_{a}^{b}\int_{a}^{b}\left[1+\sum_{k=1}^{\infty }\gamma^{k}
(\|\mathbf{x}-\mathbf{y}\|)~e_{k}(u)~e_{k}(v)\right]^{q}\right.\nonumber\\ &&\hspace*{3cm}\left.\times p(u)~p(v)~dudw\right)\nonumber\\ &&= \frac{1}{q-1}\ln \left(1+\int_{a}^{b}\int_{a}^{b}
|\mathcal{Q}_{\|\mathbf{x}-\mathbf{y}\|}(u,v)|^{q}p(u)~p(v)~dudv\right)\nonumber\\
&&=\frac{1}{q-1}\ln\left(1+\sum_{k_{1},\dots,k_{q}}\gamma^{k_{1}+\dots+k_{q}}\left(\|\mathbf{x}-\mathbf{y}\|\right)\right.
\nonumber\\ &&\left.\times \int_{a}^{b}\int_{a}^{b}\prod_{i=1}^{q}e_{k_{i}}\otimes  e_{k_{i}}(u,v) ~ p\otimes p(u,v)~dudv\right)\nonumber\end{eqnarray}
\begin{eqnarray}
&&=\frac{1}{q-1}\ln\left(1+\sum_{k_{1},\dots,k_{q}}\gamma^{k_{1}+\dots+k_{q}}\left(\|\mathbf{x}-\mathbf{y}\|\right)
\left[E_{p}\left[\prod_{i=1}^{q}e_{k_{i}}\right]\right]^{2}\right)\nonumber\\
&&\simeq  \frac{1}{q-1}\left(\sum_{k_{1},\dots,k_{q}}\gamma^{k_{1}+\dots+k_{q}}\left(\|\mathbf{x}-\mathbf{y}\|\right)
\left[E_{p}\left[\prod_{i=1}^{q}e_{k_{i}}\right]\right]^{2}\right)
\nonumber\\
 &&\leq  \frac{1}{q-1}\sup_{k_{1},\dots, k_{q}\in \mathbb{N}^{q}}\left(E_{p}\left[\prod_{i=1}^{q}e_{k_{i}}\right]\right)^{2}\left[\frac{\gamma (\|\mathbf{x}-\mathbf{y}\|)}{1-\gamma (\|\mathbf{x}-\mathbf{y}\|)}\right]^{q}\nonumber\\
&&=
\mathcal{O}\left(\|\mathbf{x}-\mathbf{y}\|^{-q\varrho }\right),\quad \mbox{as} \enspace \|\mathbf{x}-\mathbf{y}\|\to \infty.
\nonumber\\
\end{eqnarray}

Here, $\mathcal{Q}_{\|\mathbf{x}-\mathbf{y}\|}(u,v)$ denotes the kernel $\sum_{k=1}^{\infty }\gamma^{k}
(\|\mathbf{x}-\mathbf{y}\|)~e_{k}(u)~e_{k}(v),$  for  $u,v\in (a,b)$.

Similarly,  the following asymptotic  behavior is obtained:
\begin{eqnarray}
I_{q}(X(\mathbf{x}),X(\mathbf{y})) & \simeq & \frac{1}{q-1}\left(\sum_{k_{1},\dots,k_{q}}\gamma^{k_{1}+\dots+k_{q}}\left(\|\mathbf{x}-\mathbf{y}\|\right)
\left[E_{p}\left[\prod_{i=1}^{q}e_{k_{i}}\right]\right]^{2}\right)\nonumber\\
& \geq & \frac{1}{q-1}\inf_{k_{1},\dots, k_{q}\in \mathbb{N}^{q}}\left[E_{p}\left[\prod_{i=1}^{q}e_{k_{i}}\right]\right]^{2}\left[\frac{\gamma (\|\mathbf{x}-\mathbf{y}\|)}{1-\gamma (\|\mathbf{x}-\mathbf{y}\|)}\right]^{q}\nonumber\\
&=&
\mathcal{O}\left(\|\mathbf{x}-\mathbf{y}\|^{-q\varrho }\right),\quad \|\mathbf{x}-\mathbf{y}\|\to \infty,
\nonumber\\
\end{eqnarray}
as we wanted to prove. Here, we have   applied that $\inf_{k_{1},\dots, k_{q}\in \mathbb{N}^{q}}\left[E_{p}\left[\prod_{i=1}^{q}e_{k_{i}}\right]\right]^{2}>0,$  for $q>1.$  Note that for $q=1,$ $E_{p}[e_{k}]=0,$ for every $k\geq 1$ (see, e.g.,  Leonenko at al., 2017).
\end{proof}

\begin{remark}
Condition (\ref{cth2}) is satisfied, for example,  when the moment generating function of the marginal probability distributions exists. That is the case of Gaussian and Gamma-correlated random fields.
\end{remark}

\subsection{Simulations}
\label{subsection:spatial-simulations}

Let $\{X(\mathbf{x}),\ \mathbf{%
x}\in \mathbb{R}^{d}\}$ be  a measurable zero-mean Gaussian homogeneous and
isotropic mean-square continuous random field on a probability space $%
(\Omega ,\mathcal{A},P),$ with $\mathrm{E}[Y^{2}(\mathbf{x})]=1,$ for all $%
\mathbf{x}\in \mathbb{R}^{d},$ and correlation function $\mathrm{E}[X(%
\mathbf{x})X(\mathbf{y})]$  $=B(\Vert \mathbf{x}-\mathbf{y}\Vert )$ of the form:
\begin{equation}
B(\Vert \mathbf{z}\Vert )=\frac{\mathcal{L}(\Vert \mathbf{z}\Vert
)}{\Vert \mathbf{z}\Vert ^{\alpha }},\quad \mathbf{z}\in
\mathbb{R}^{d},\quad 0<\alpha < d/2.  \label{cov}
\end{equation}%
\noindent The correlation  $B$ of $X$ is
a continuous function of $r=\Vert \mathbf{z}\Vert.$ It then follows that $\mathcal{L}(r)=\mathcal{O}(r^{\alpha }),$ $%
r\to 0.$ Note that the covariance function

\begin{equation}
B(\Vert \mathbf{z}\Vert )=\frac{1}{(1+\Vert \mathbf{z}\Vert ^{\beta
})^{\gamma }},\quad 0<\beta \leq 2,\quad \gamma >0,\label{eqcorrfunctex}
\end{equation}%
\noindent is a particular case of the family of covariance functions
 (\ref{cov}) studied here with $\alpha =  \beta\gamma,$ and
\begin{equation}\mathcal{L}(\Vert
\mathbf{z}\Vert )=\frac{\Vert \mathbf{z}\Vert ^{\beta \gamma }}{(1+\Vert \mathbf{z}%
\Vert ^{\beta })^{\gamma }}.\label{svfex}
\end{equation}

In the present  simulation study, we restrict our attention to such a family of covariance functions. Specifically,  we have considered the parameter values $\beta = \gamma =0.2,$ in the generations of Gaussian random field $X$ with covariance function
(\ref{eqcorrfunctex}) (see Figure \ref{fig:62}).  From ten independent copies  $X_{i},$ $i=1,\dots,10,$ of  random field  $X$ a $\chi_{10}^{2}$ random field is also generated from the identity
$$\chi_{10}^{2}(\mathbf{x})=\frac{1}{2}\left[X_{1}^{2}(\mathbf{x})+\dots+X_{10}^{2}(\mathbf{x})\right],\mathbf{x}\in \mathbb{R}^{d}.$$
\noindent Its correlation function $\gamma $ is given  by
$$\gamma (\|\mathbf{x}-\mathbf{y}\|)=\frac{\mbox{Cov}(\chi_{10}^{2}(\mathbf{x}), \chi_{10}^{2}(\mathbf{y}))}{\mbox{Var}(\chi_{10}^{2}(\mathbf{0}))}=B^{2}(\Vert \mathbf{z}\Vert ),$$
\noindent where $B^{2}(\Vert \mathbf{z}\Vert )$ has been introduced in  (\ref{eqcorrfunctex}).

The results derived in Lemma \ref{lem}, and Theorems \ref{th1}  and \ref{th2}  are illustrated for both models, computing Shannon and R\'enyi mutual informations for the corresponding  original random variables, and for their transformed versions in terms of indicator functions, considering an increasing sequence of distances between the involved random variables. Specifically,  a truncated version of $I(X(\mathbf{x}),X(\mathbf{y})),$ and  $I_{q}(X(\mathbf{x}),X(\mathbf{y})),$ based on $M=5$ Hermite and Laguerre polynomials, respectively,  is computed for spatial distances $d_{n}=\|\mathbf{x}_{n}-\mathbf{y}_{n}\|,$  $d_{n}=1,\dots,1000.$  The derived lower and upper bounds are  represented as well.
  In particular, Figure \ref{fig:62b} displays in green dashed line   $I(X(\mathbf{x}),X(\mathbf{y}))$  (top-left), and $I(\chi_{10}^{2}(\mathbf{x}),\chi_{10}^{2}(\mathbf{y}))$ (top-right)\linebreak
$I(g(X(\mathbf{x})),g(X(\mathbf{y})))$  (bottom-left), and $I(g(\chi_{10}^{2}(\mathbf{x})),g(\chi_{10}^{2}(\mathbf{y})))$ \linebreak (bottom-right). Here, $g(x)=1_{\nu}(x),$ $\nu=0.95.$
 The upper and lower bounds are represented in dashed  red and blue lines, respectively.
The  values of $I_{q}(X(\mathbf{x}),X(\mathbf{y})),$ $q=1.5,~2.10,~2.25;$ $I_{q}(\chi_{10}^{2}(\mathbf{x}),\chi_{10}^{2}(\mathbf{y})),$    $q=2,~2.05,~2.10;$ $I_{q}(g(X(\mathbf{x})),g(X(\mathbf{y}))),$ $q=1.5,~1.75,~1.95;$    and $I_{q}(g(\chi_{10}^{2}(\mathbf{x})),g(\chi_{10}^{2}(\mathbf{y}))),$   $q=1.75,~1.85,~ 1.95;$ for $g(x)=1_{\nu}(x),$ $\nu=0.95,$ are also plotted in Figure \ref{fig:62bb}.  It must be observed that,  as we have checked through a large number of simulations, sensitivity at shorter distances with respect to the deformation parameter $q$ depends on  the polynomial basis and the truncation order selected.

\begin{figure}[htbp]
\begin{center}
 \hspace*{1.2cm}
 \includegraphics[height=5cm, width=5.5cm]{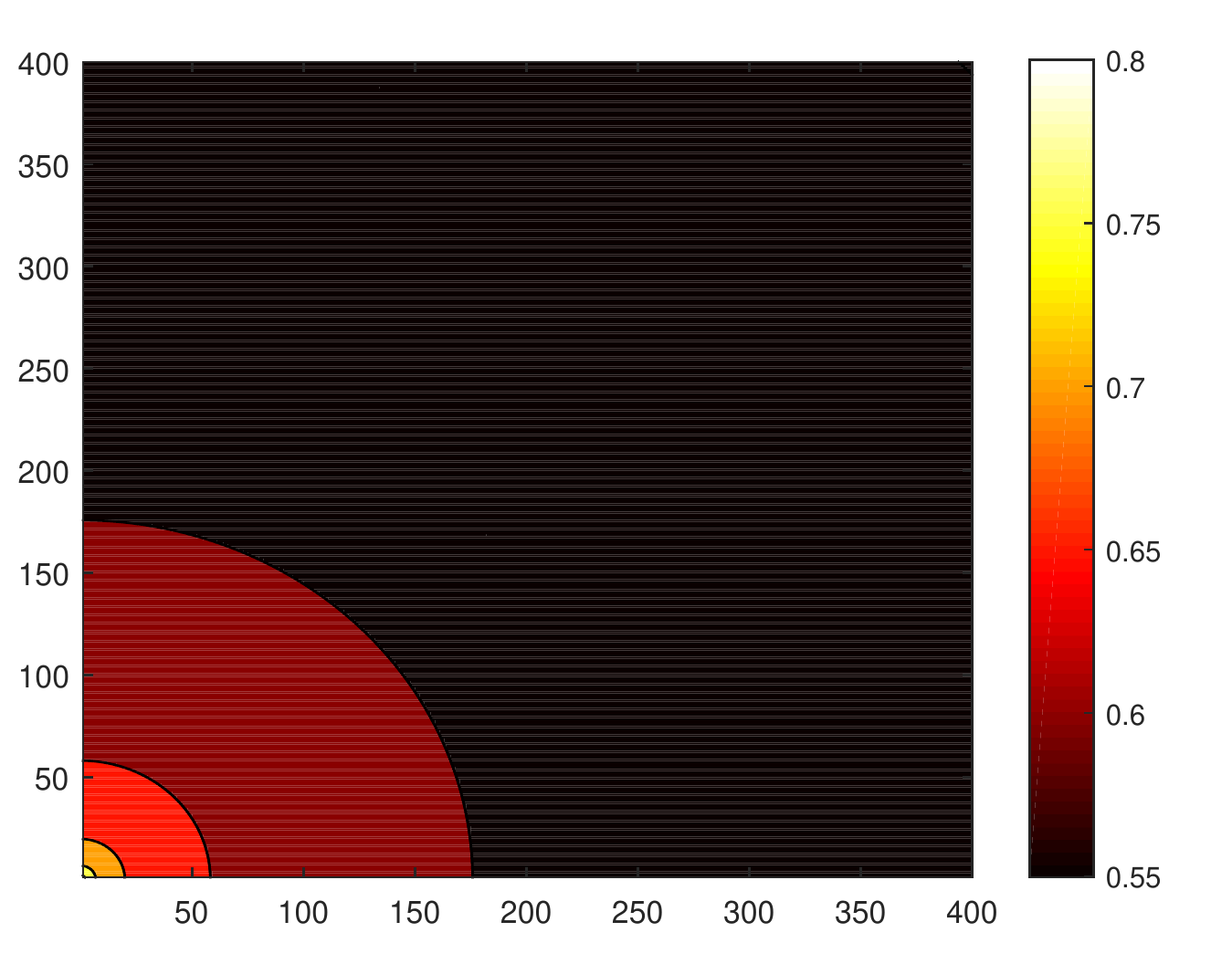}
 \includegraphics[height=5cm, width=5.5cm]{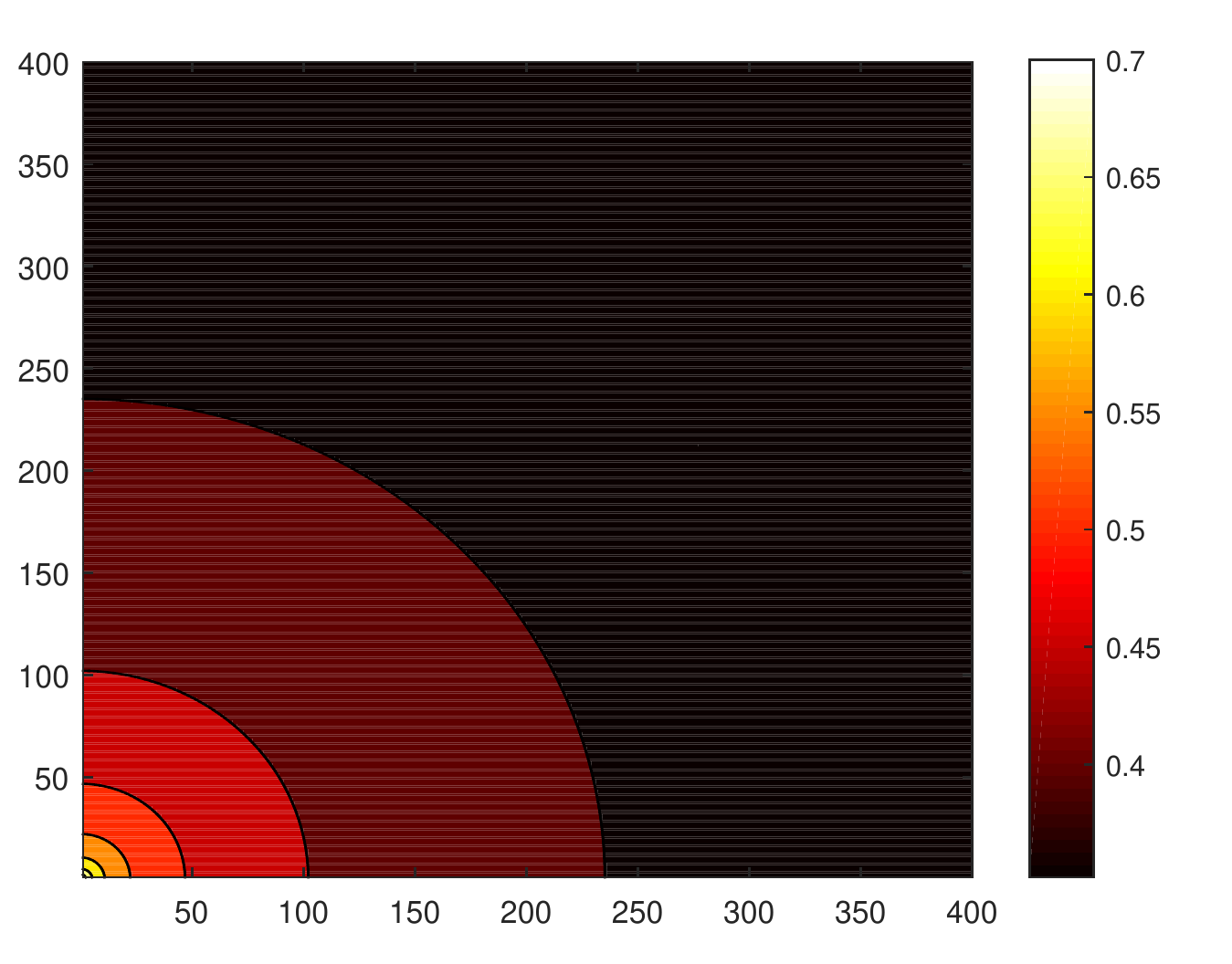}
\end{center}
\caption{{Covariance function of $X$  (left-hand side), and covariance function of $\chi_{10}^{2}$  (right-hand side) for a regular grid of $20\times 20$ spatial locations, with respect to distances in horizontal axis.\label{fig:62}}}
\end{figure}

\begin{figure}[htbp]
\begin{center}
 \hspace*{1.2cm}
 \hspace*{-0.5cm}\includegraphics[height=5cm, width=5cm]{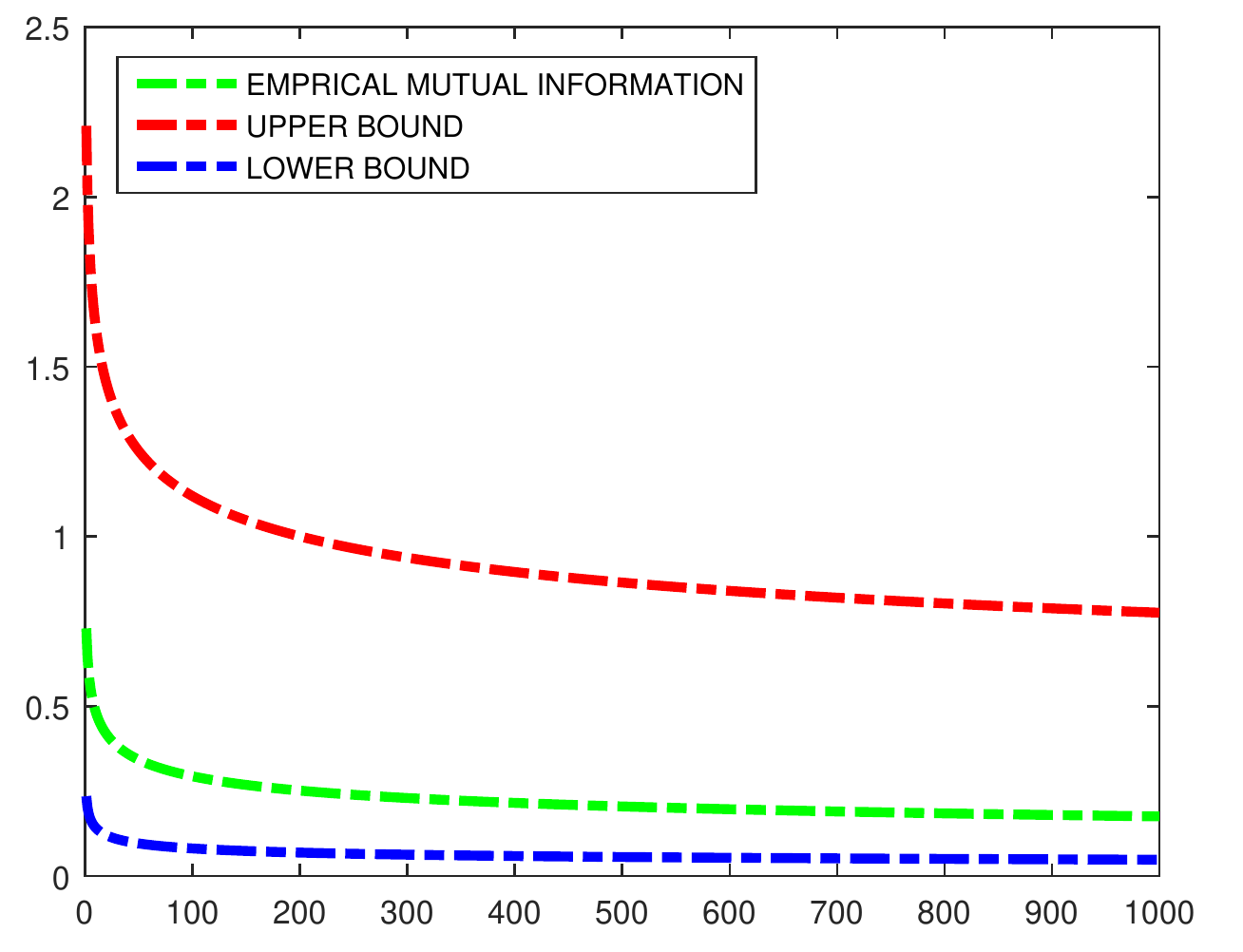}
\includegraphics[height=5cm, width=5cm]{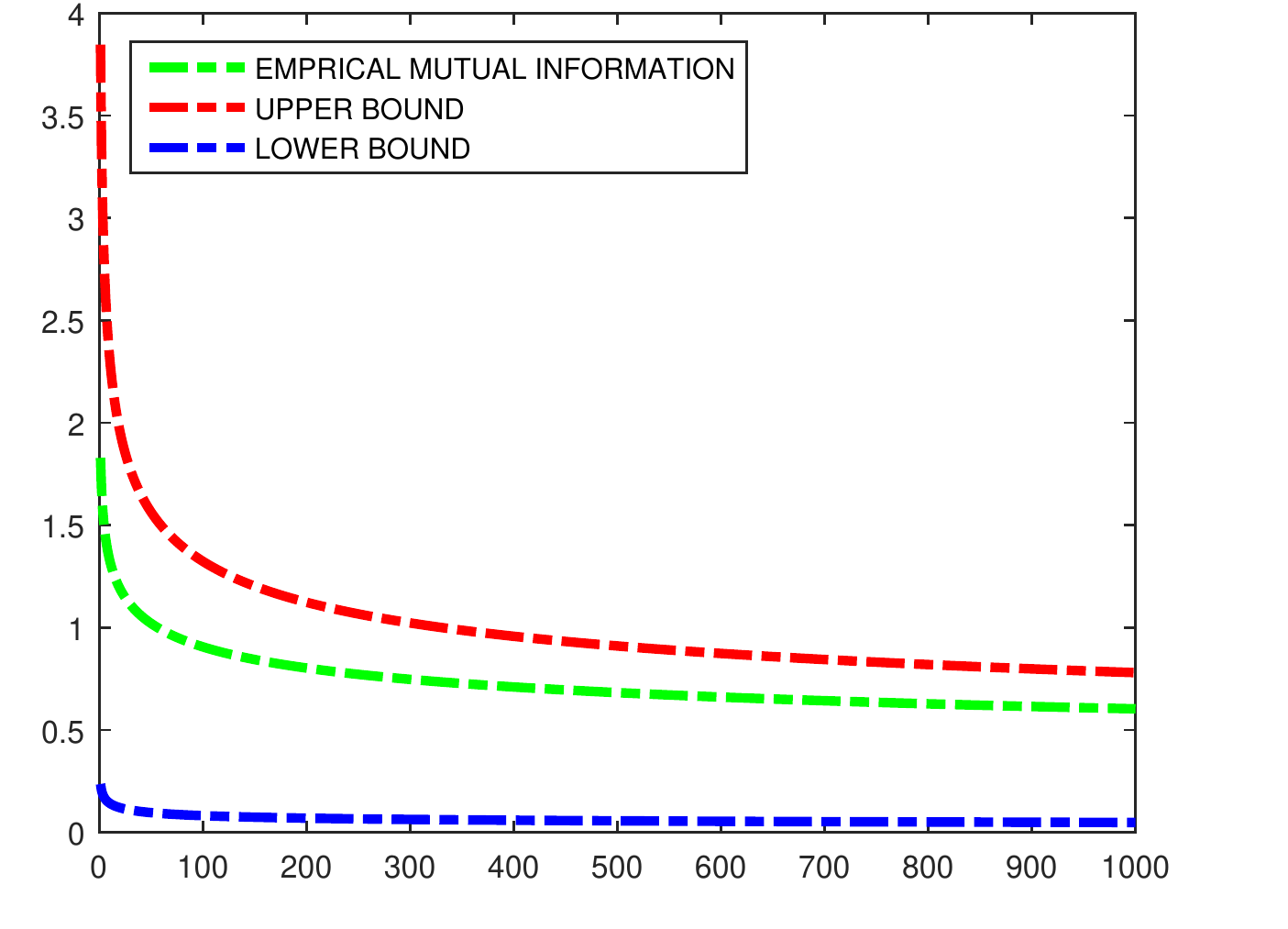}
\hspace*{0.5cm} \includegraphics[height=5cm, width=5cm]{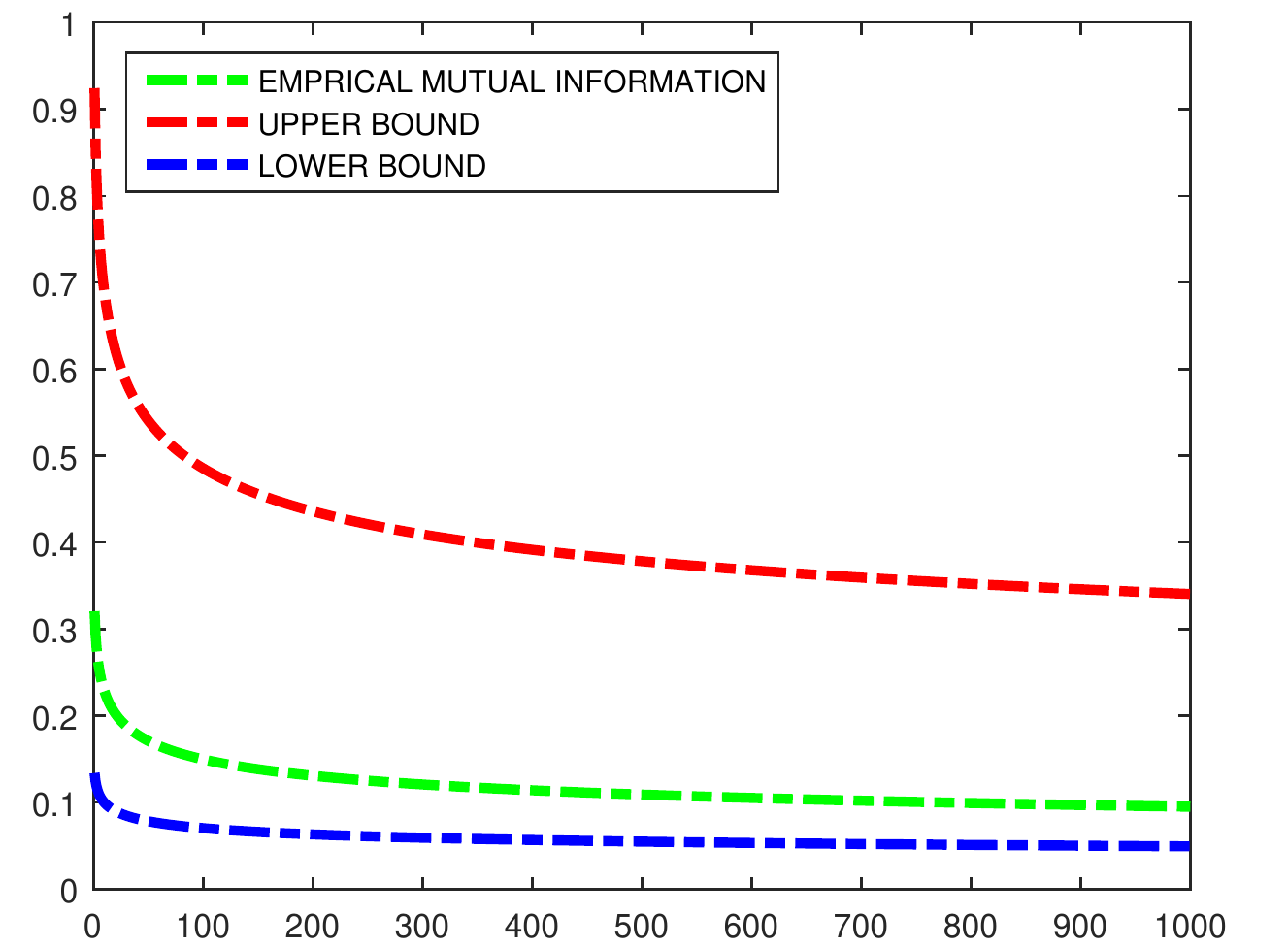}
\includegraphics[height=5cm, width=5cm]{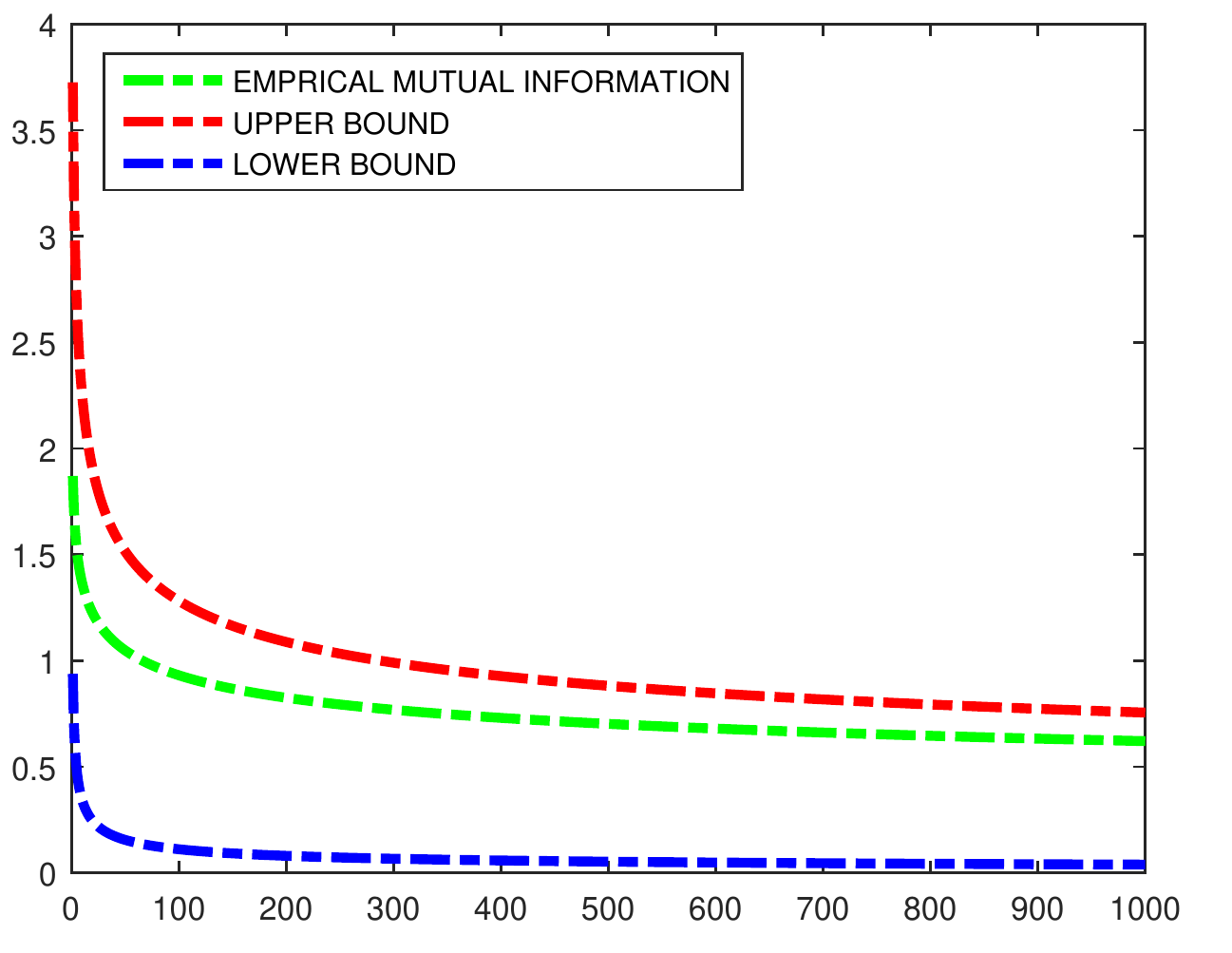}
\end{center}
\caption{{In green color, truncated $I(X(\mathbf{x}),X(\mathbf{y}))$  (top-left), and $I(\chi_{10}^{2}(\mathbf{x}),\chi_{10}^{2}(\mathbf{y}))$ (top-right),
$I(g(X(\mathbf{x})),g(X(\mathbf{y})))$  (bottom-left), and $I(g(\chi_{10}^{2}(\mathbf{x})),g(\chi_{10}^{2}(\mathbf{y})))$ (bottom-right).
 The upper and lower bounds are represented in red and blue colors, respectively.  Here, $g(x)=1_{\nu}(x),$ $\nu=0.95$\label{fig:62b}}}
\end{figure}

\begin{figure}[htbp]
\begin{center}
\includegraphics[height=4cm, width=4cm]{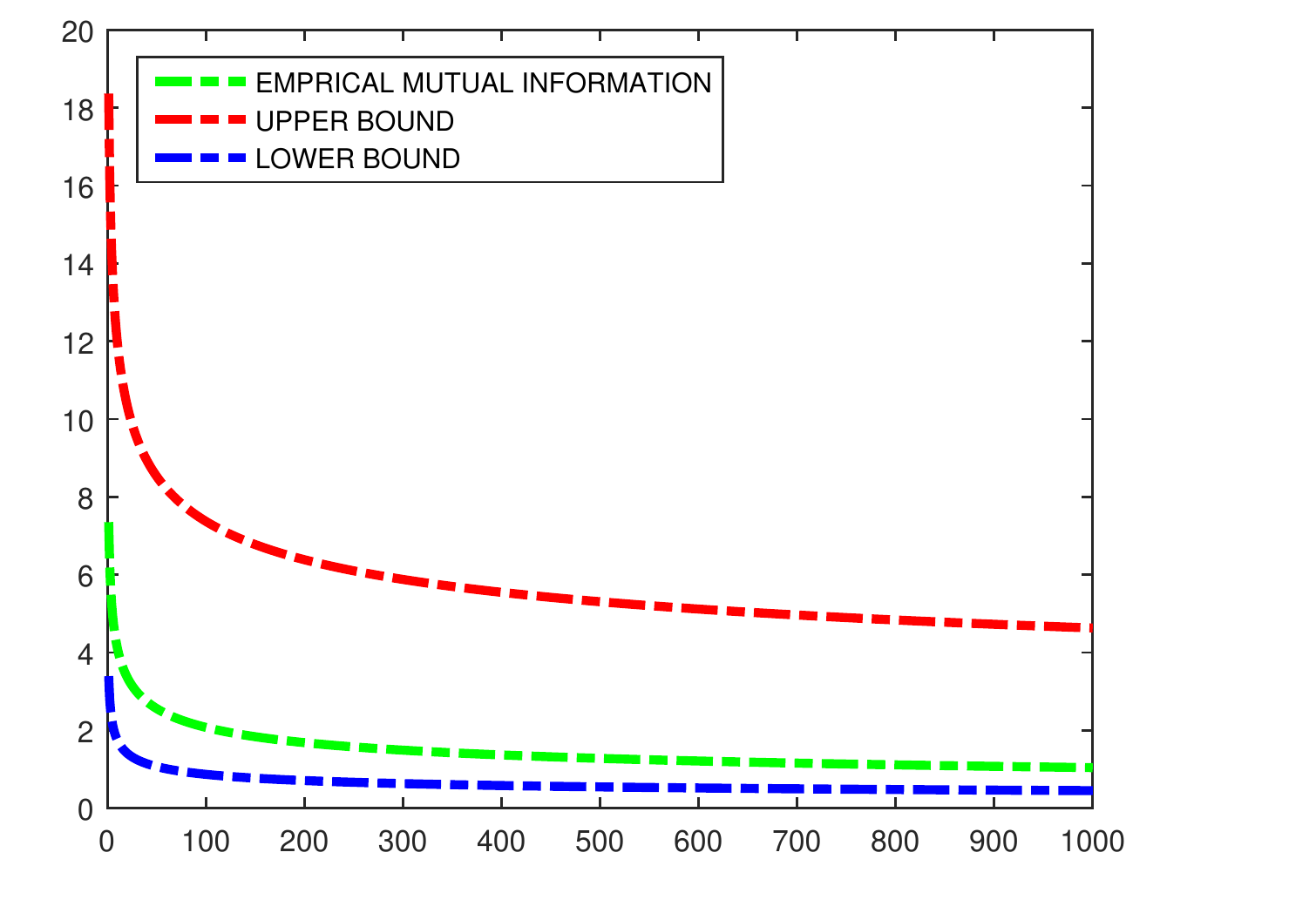}
\includegraphics[height=4cm, width=4cm]{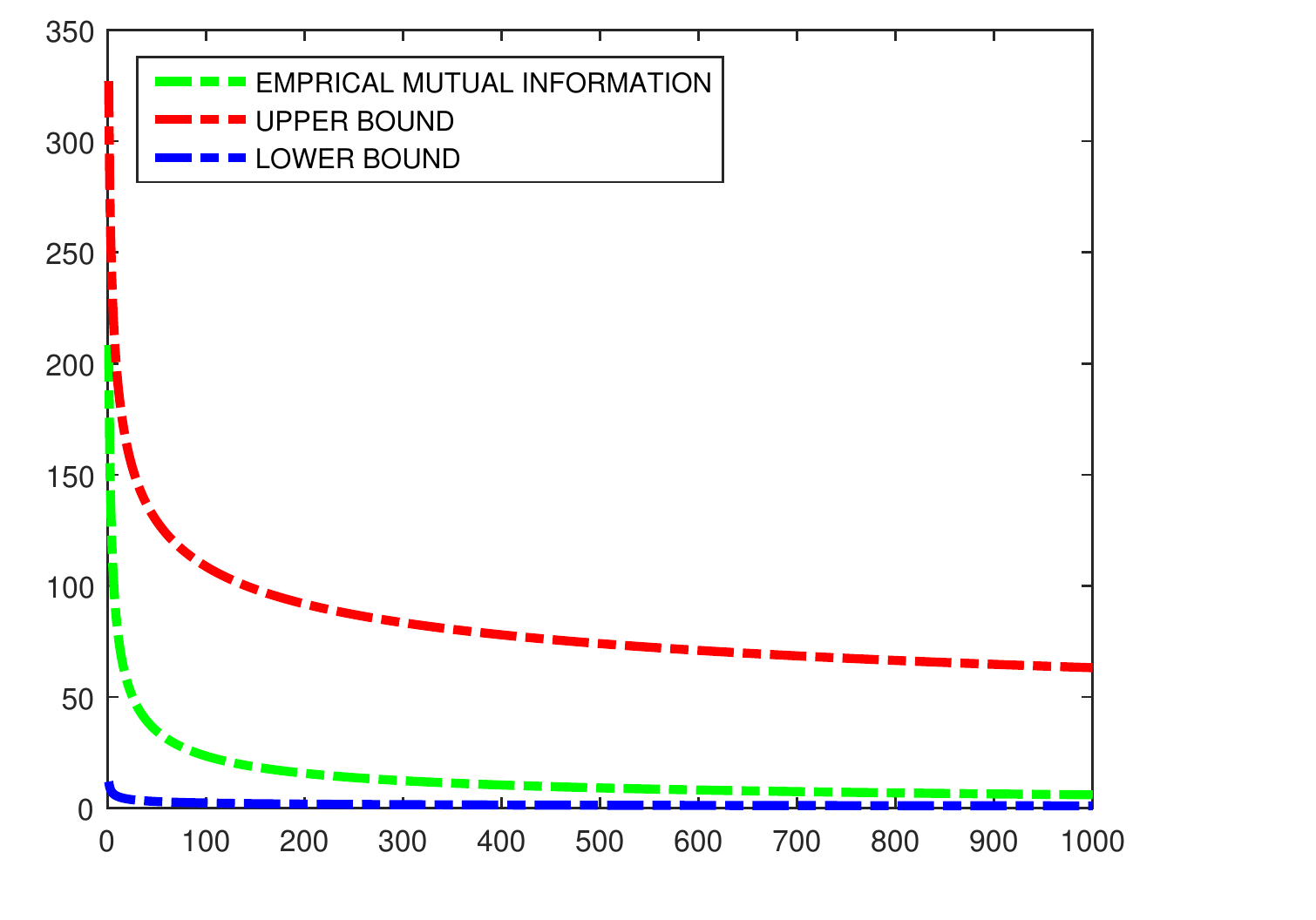}
 \includegraphics[height=4cm, width=4cm]{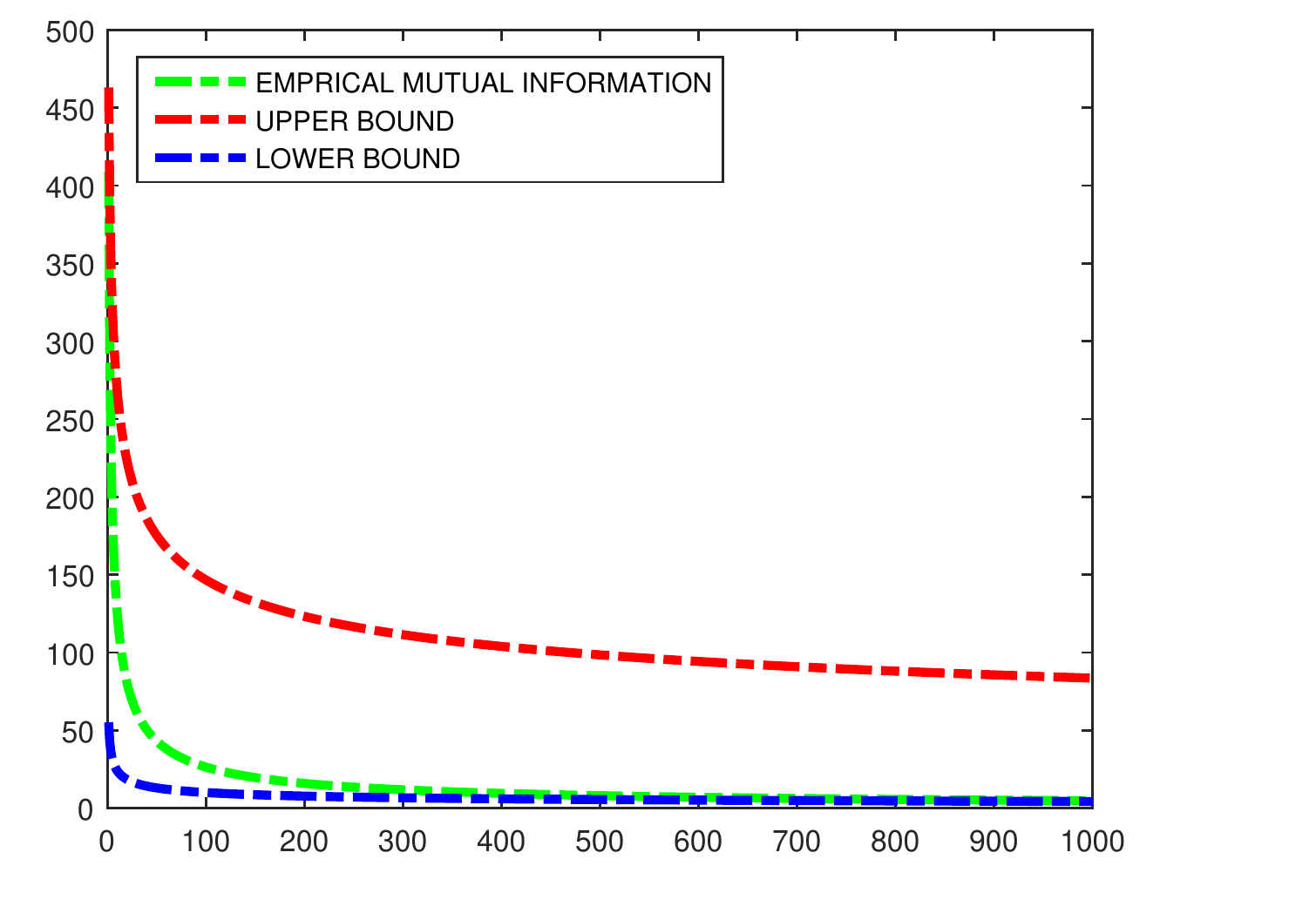}

\includegraphics[height=4cm, width=4cm]{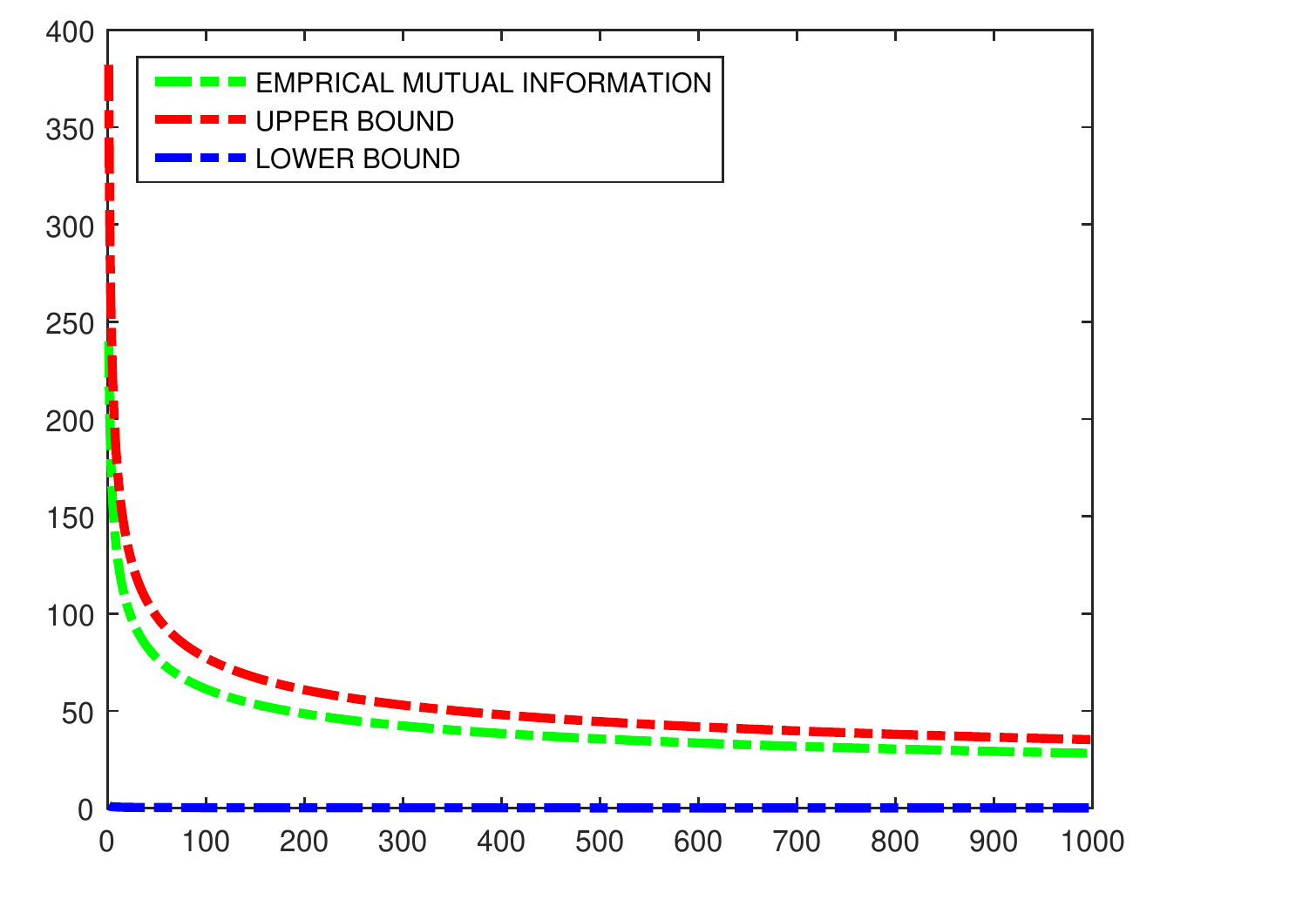}
\includegraphics[height=4cm, width=4cm]{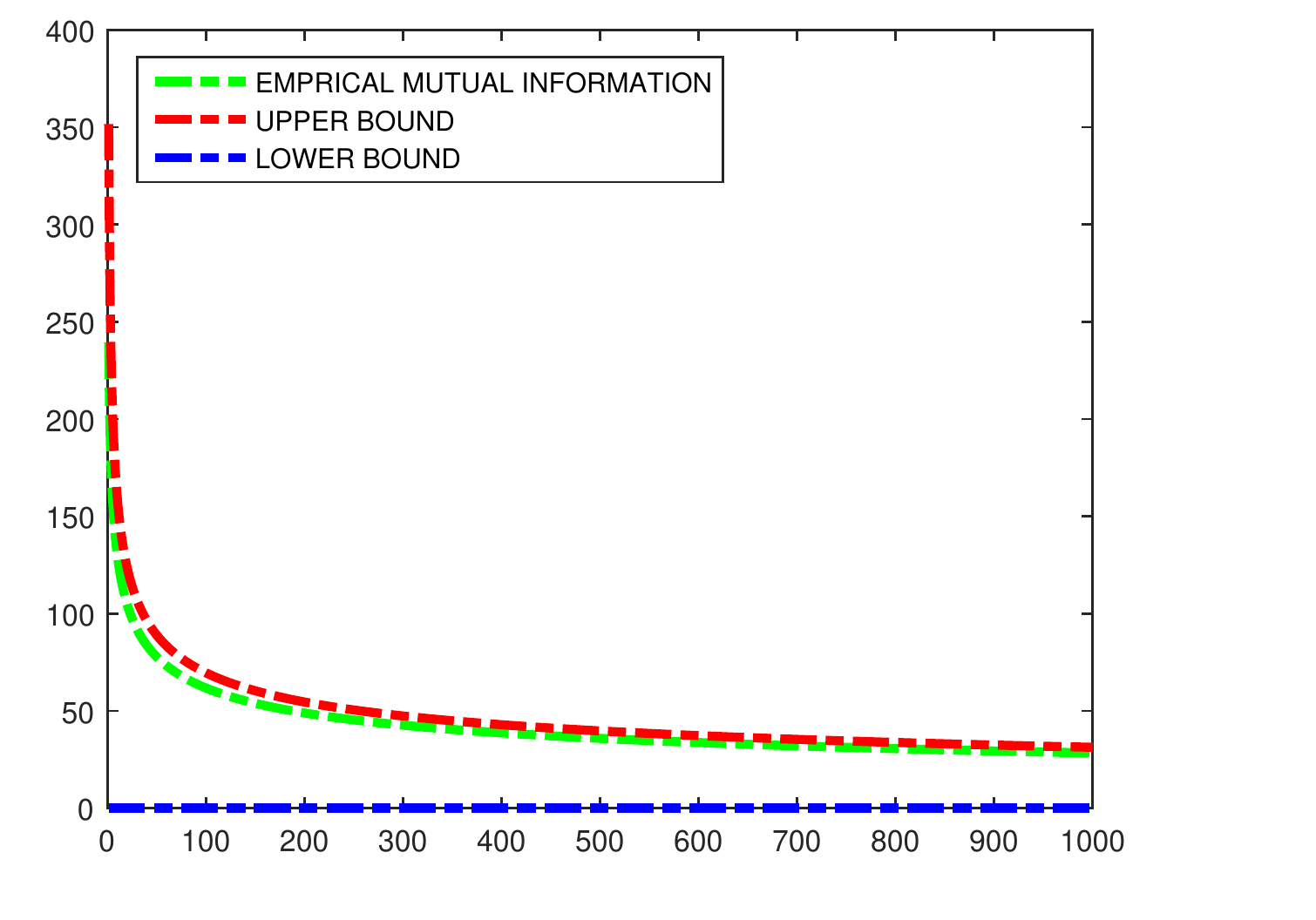}
\includegraphics[height=4cm, width=4cm]{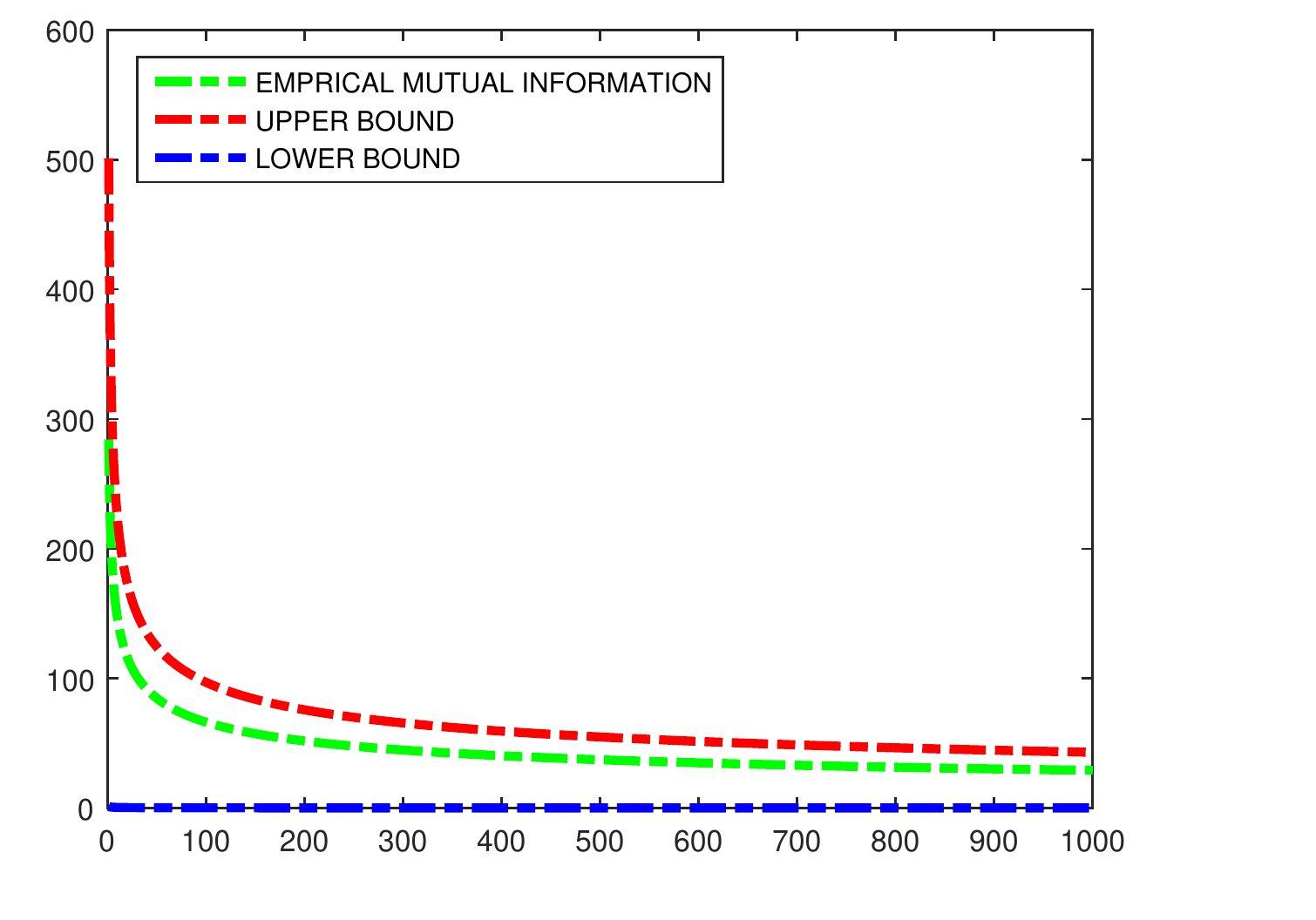}

\includegraphics[height=4cm, width=4cm]{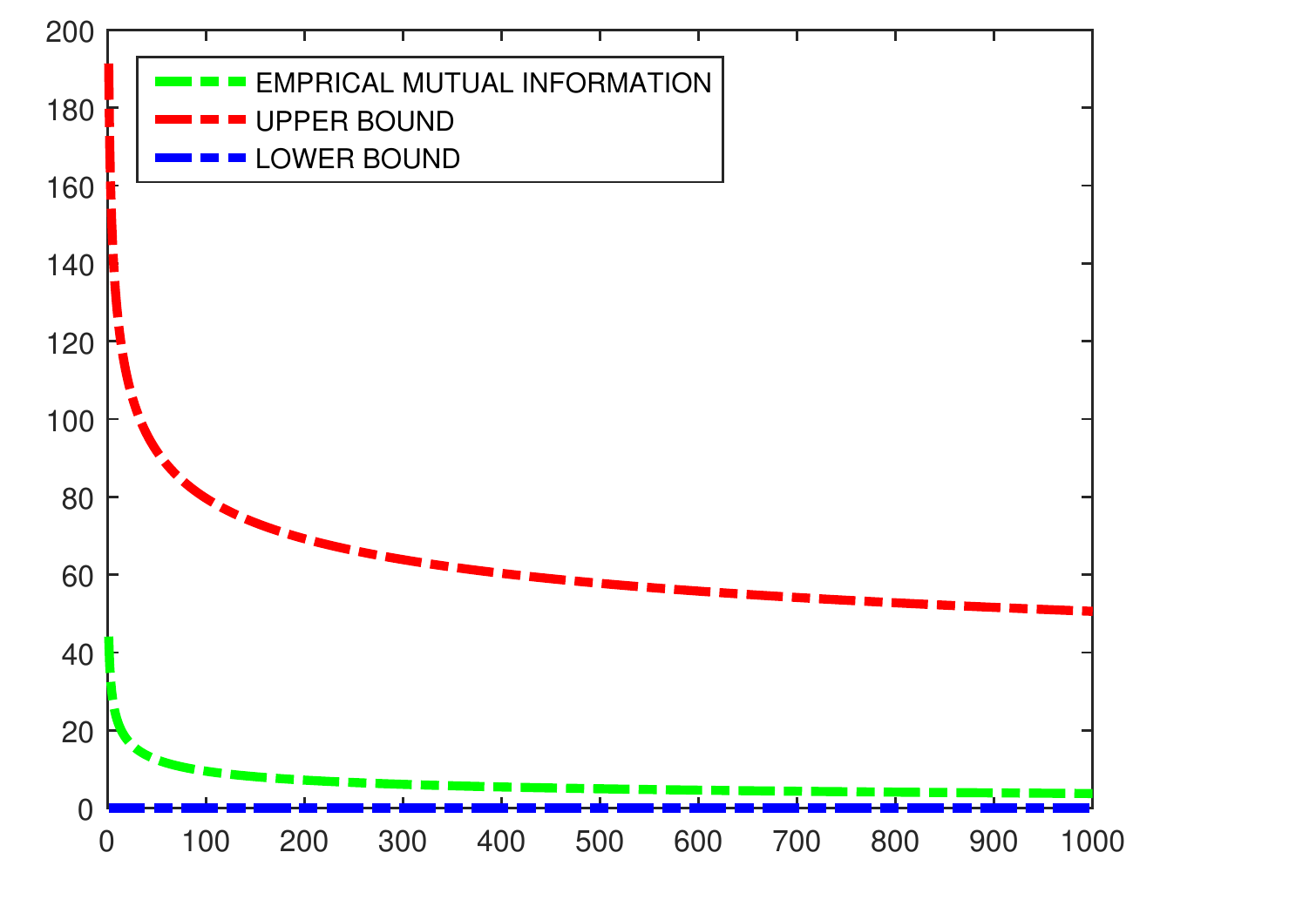}
\includegraphics[height=4cm, width=4cm]{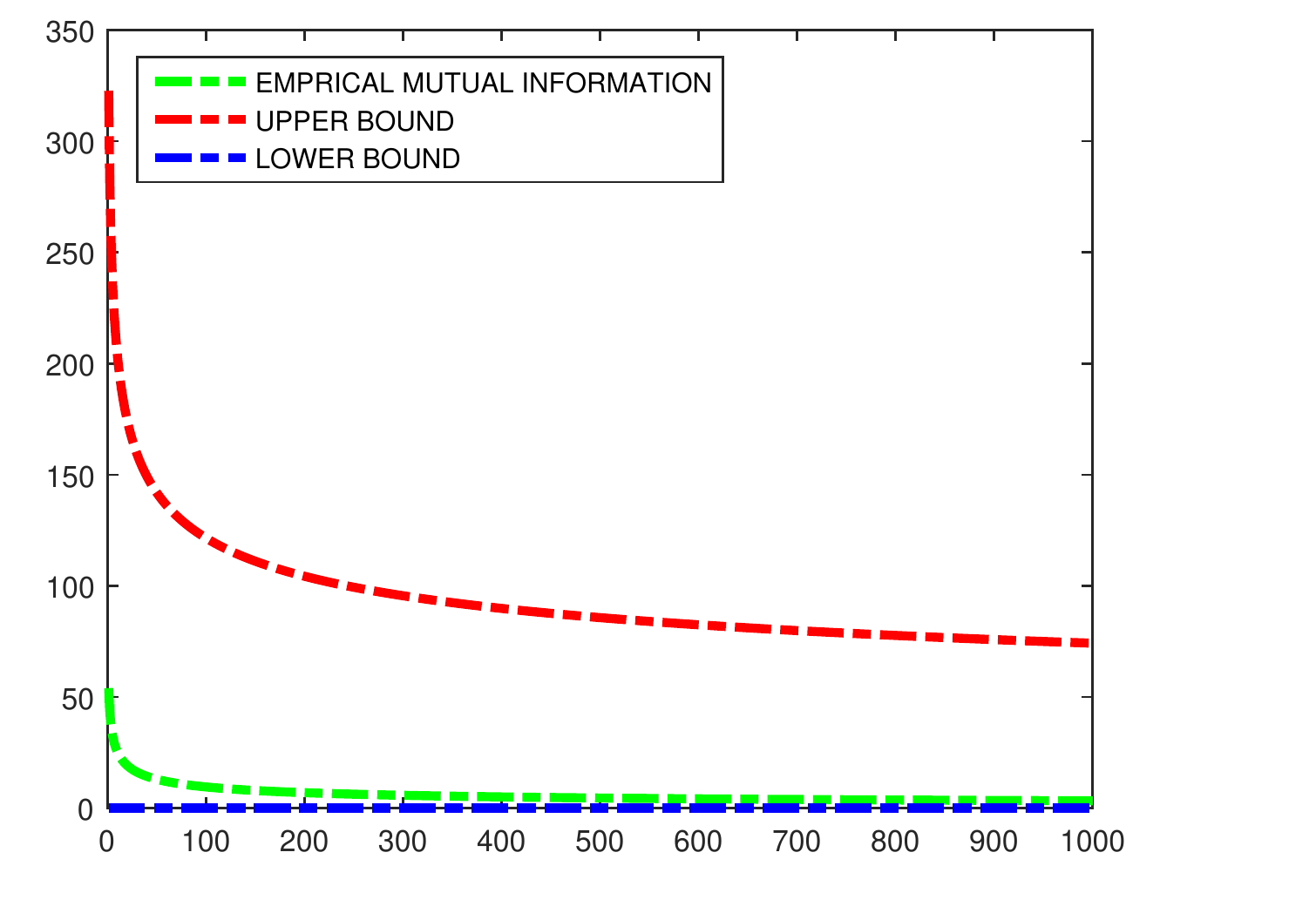}
\includegraphics[height=4cm, width=4cm]{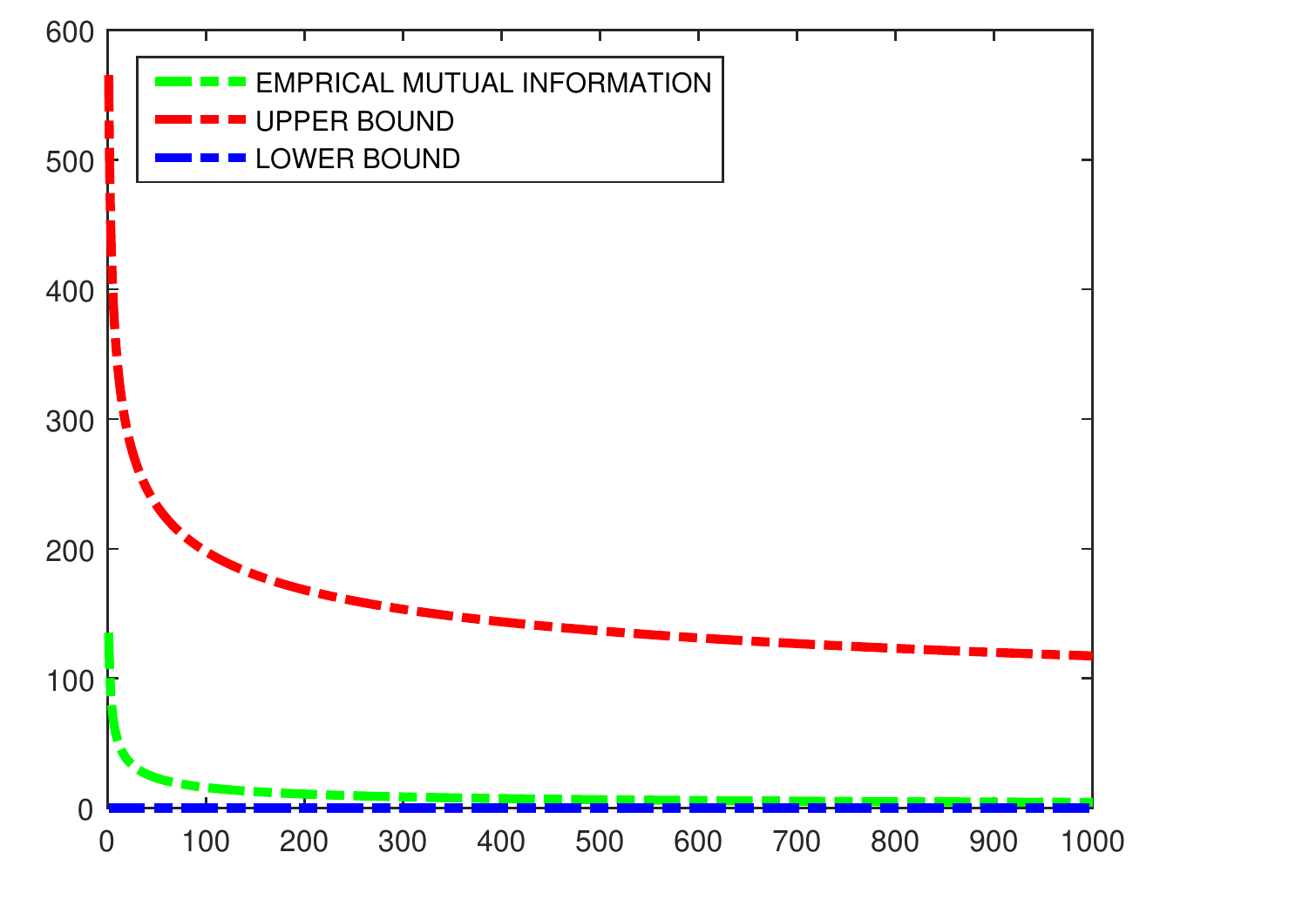}

\includegraphics[height=4cm, width=4cm]{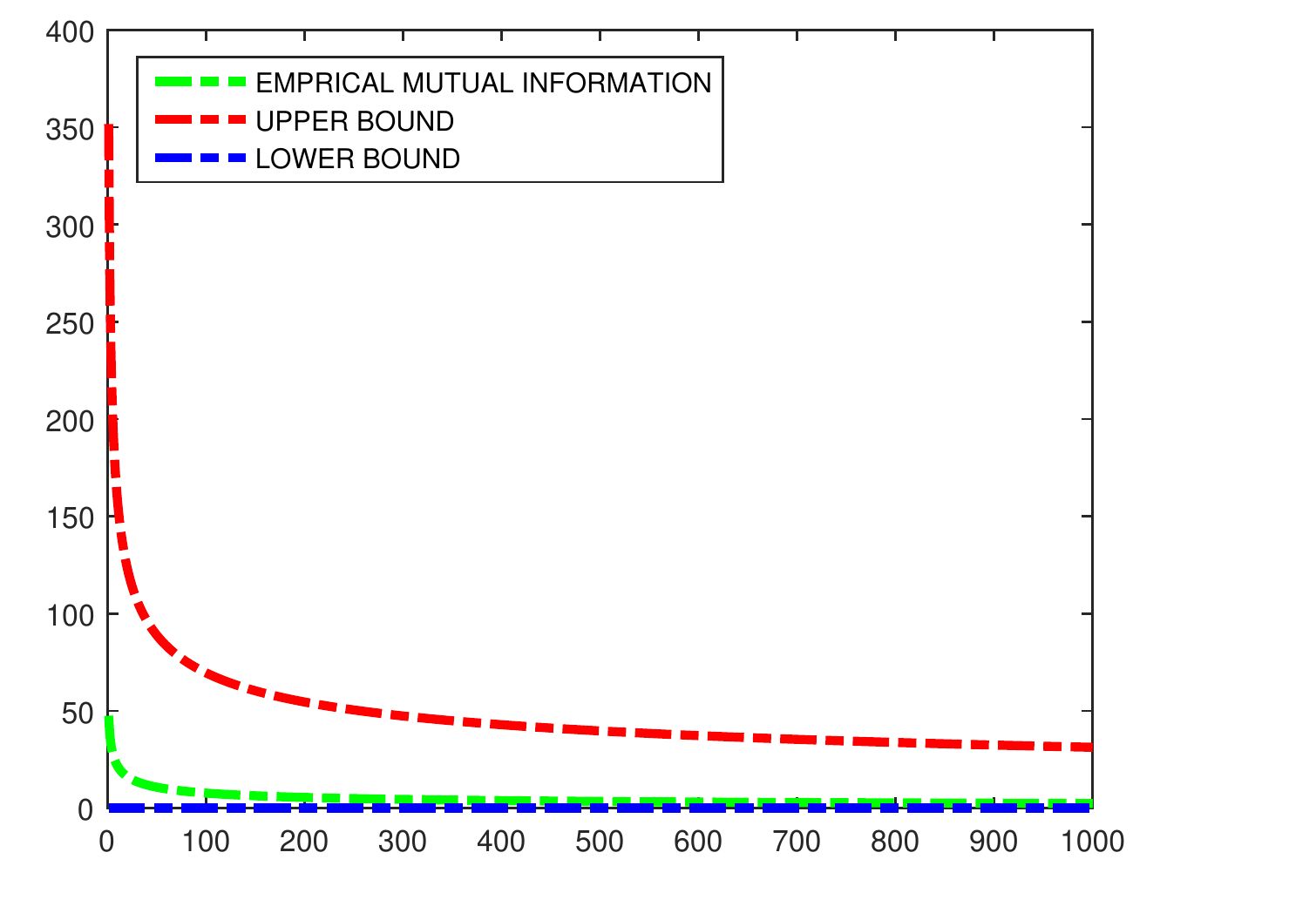}
\includegraphics[height=4cm, width=4cm]{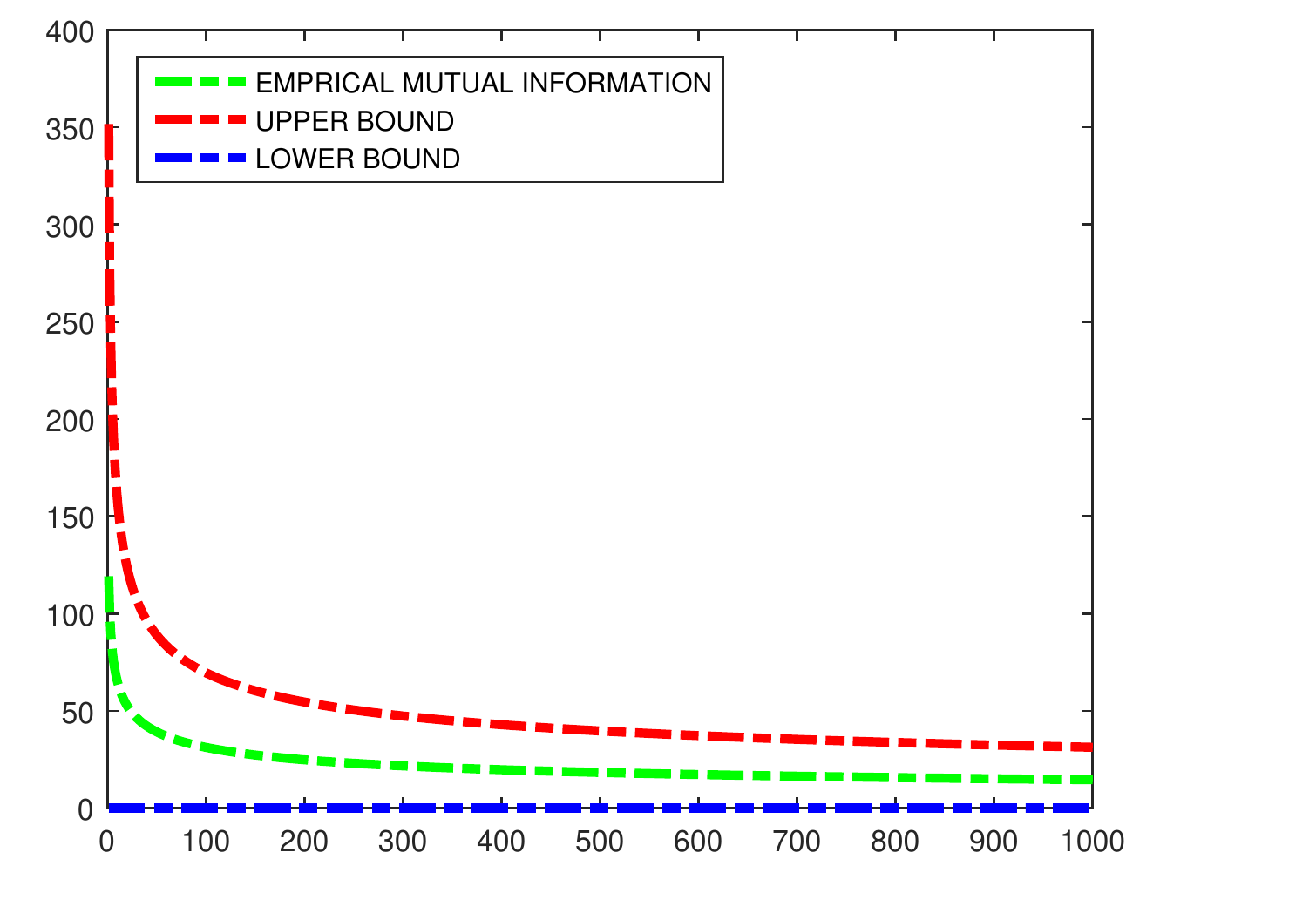}
\includegraphics[height=4cm, width=4cm]{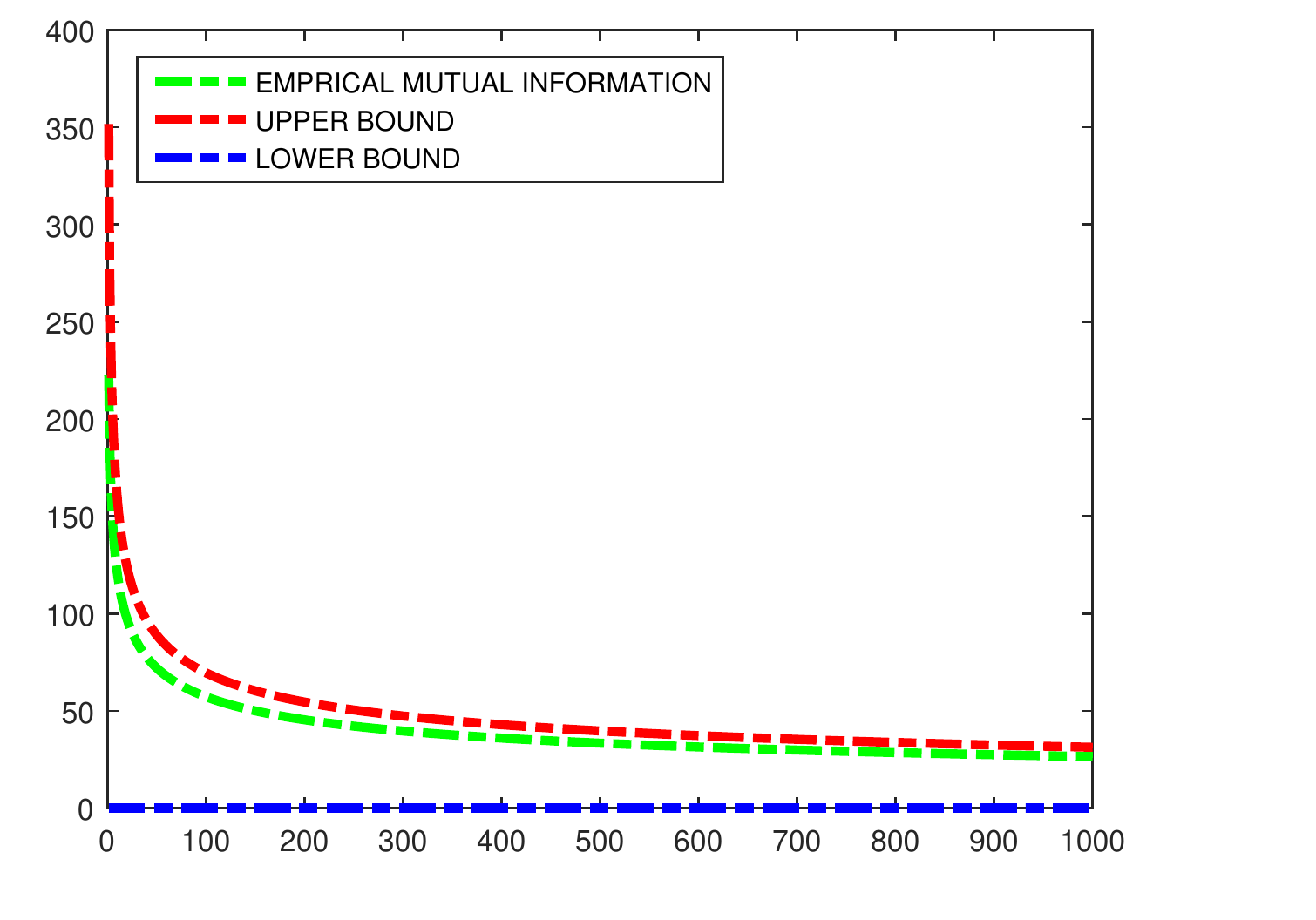}
\end{center}
\caption{{In green color, truncated $I_{q}(X(\mathbf{x}),X(\mathbf{y})),$ $q=1.5,~2.10,~2.25$   (top-row); $I_{q}(\chi_{10}^{2}(\mathbf{x}),\chi_{10}^{2}(\mathbf{y})),$  $q=2,~2.05,~2.10$  (second row);
$I_{q}(g(X(\mathbf{x})),g(X(\mathbf{y}))),$ $q=1.5,~1.75,~1.95$  (third row), and $I_{q}(g(\chi_{10}^{2}(\mathbf{x})),g(\chi_{10}^{2}(\mathbf{y}))),$ $q=1.75,~1.85,~1.95,$ for $g(x)=1_{\nu}(x)$ (fourth  row).
 The upper and lower bounds are represented in red and blue colors, respectively.  As before, $g(x)=1_{\nu}(x),$ $\nu=0.95$\label{fig:62bb}}}
\end{figure}

\section{Spatiotemporal case: functional approach}
\label{section:st-case}

In this section, we consider the extension of the above introduced concepts and elements in an infinite-dimensional framework. In this sense, a wider concept of  diversity is adopted for  functional systems characterized by separable non-countable families of infinite-dimensional random variables, and their measurable functions.

Specifically, the departure from independence of the components of a  functional  system, as displayed by  spatial white noise random fields,  is measured in terms of  diversity loss in the spatial functional sample paths. Equivalently,  diversity loss is induced here  by the strong interrelations displayed by the functional random components of such systems.

\subsection{Mutual information in an infinite-dimensional framework}

The formulation of mutual information as a measure for spatiotemporal structural complexity analysis can be addressed for the general class of Lancaster-Sarmanov random fields adopting the  infinite-dimensional spatial framework introduced in Angulo and Ruiz-Medina (2023).

Let $X=\{X_{\mathbf{x}}(\cdot ),\ \mathbf{x}\in \mathbb{R}^d \}$  be a zero-mean homogeneous and isotropic spatial functional random field on the separable Hilbert space $(H, <\cdot,\cdot>_H)$, mean-square-continuous w.r.t. the $H$ norm. In the following, we will assume that $H=L^{2}(\mathcal{T}),$ with $\mathcal{T}\subseteq\mathbb{R}_{+}$. For every $\mathbf{x},\mathbf{y}\in \mathbb{R}^d$,  $\left(X_{\mathbf{x}}(\cdot ),X_{\mathbf{y}}(\cdot)\right)^{T}$ is a random element in the separable Hilbert space   $\left(H^{2}, \left\langle \cdot,\cdot\right\rangle_{H^{2}}\right)$ of vector functions $\mathbf{f}=(f_{1},f_{2})^{T}$, with the inner product  given by
$\left\langle \mathbf{f},\mathbf{g}\right\rangle_{H^2}=\sum_{i=1}^{2}\left\langle f_{i}, g_{i}\right\rangle_{H},\ \forall \mathbf{f},\mathbf{g}\in H^{2}$.
Thus, for every $\mathbf{x},\mathbf{y}\in \mathbb{R}^d$,  we consider the measurable function $$\left(X_{\mathbf{x}}(\cdot ),X_{\mathbf{y}}(\cdot)\right)^{T}: \;(\Omega,\mathcal{A}, P)\longrightarrow \left( H^{2},\mathcal{B}(H^{2}),P_{\|\mathbf{x}-\mathbf{y}\|}(dh_{1},dh_{2})\right).$$
Let us denote by $\{P_{X_{\mathbf{x}}(\cdot)}(dh),\ \mathbf{x}\in \mathbb{R}^d\}$ the marginal infinite-dimensional probability  distributions,
with $P_{X_{\mathbf{x}}(\cdot)}(dh)=P(dh),$
  for every $\mathbf{x}\in \mathbb{R}^d.$ Let $L^{2}(H,P(dh))$ be the space of measurable functions $g:\ H \longrightarrow \mathbb{R}$ such that $\int_{H}|g(h)|^{2}P(dh)<\infty.$
Assume that there exists an orthonormal basis $\{\mathcal{B}_{k},\ k\geq 0\}$ of $L^{2}(H,P(dh))$  such that the Radon-Nikodym derivative of the bivariate infinite-dimensional probability distribution $P_{\|\mathbf{x}-\mathbf{y}\|}(dh_{1},dh_{2})$ can be written in terms of the corresponding marginals as (see, e.g., Ledoux  and  Talagrand, 1991), for $n,m\geq 1,$
\begin{eqnarray} &&p_{\|\mathbf{x}-\mathbf{y}\|,n,m}(h_{1},h_{2})= p(h_{1})p(h_{2})
\nonumber\\
&&\hspace*{-1cm}\times  \left[1+\sum_{k\geq 1}
\left[\int_{H}\left\langle X_{\mathbf{x}}(\cdot),\phi_{n}(\cdot )\right\rangle_{L^{2}(\mathcal{T})}
\left\langle X_{\mathbf{y}}(\cdot),\phi_{m}(\cdot )\right\rangle_{L^{2}(\mathcal{T})} P_{\|\mathbf{x}-\mathbf{y}\|}
(dX_{\mathbf{x}}(\cdot),dX_{\mathbf{y}}(\cdot))\right]^{k} \right.
\nonumber\\
&&\hspace*{5cm}\left.\times
\mathcal{B}_{k}(h_{1}) \mathcal{B}_{k}(h_{2})\right]\mu(dh_{1}, dh_{2}),\nonumber
\end{eqnarray}
\noindent for a given orthonormal basis  $\{ \phi_{n},\ n \geq 1\}$ of $H,$ with \begin{eqnarray} & & \gamma_{\|\mathbf{x}-\mathbf{y}\|}(\phi_{n})(\phi_{m}) =\mbox{Corr} (X_{\mathbf{x}}(\phi_{n}), X_{\mathbf{y}}(\phi_{m}))\nonumber\\ &&=\int_{H}\left\langle X_{\mathbf{x}}(\cdot),\phi_{n}(\cdot )\right\rangle_{L^{2}(\mathcal{T})}
\left\langle X_{\mathbf{y}}(\cdot),\phi_{m}(\cdot )\right\rangle_{L^{2}(\mathcal{T})} P_{\|\mathbf{x}-\mathbf{y}\|}
(dX_{\mathbf{x}}(\cdot),dX_{\mathbf{y}}(\cdot))\nonumber\end{eqnarray} \noindent  being the  spatial correlation operator applied to the elements $\phi_{n}$ and $\phi_{m}$ of the orthonormal basis $\{ \phi_{n},\ n \geq 1\},$
 for every $\mathbf{x},\mathbf{y}\in \mathbb{R}^d.$  Here,  $p(h)$ denotes  the Radon-Nikodym derivative of the absolutely continuous marginal infinite--dimensional probability measure $P(dh),$  with respect to the uniform probability measure $\mu (dh)$ in $H.$ The bivariate uniform measure    on $H^{2}$ is denoted as  $\mu(dh_{1}, dh_{2}).$

Under the above setting of conditions, the resulting class of spatial functional random fields defines the infinite--dimensional version  of Lancaster-Sarmanov random fields. In the  simulation study undertaken in the  next section, we analyze the asymptotic behavior of the Shannon entropy based mutual information between two random components of an element of this functional random field class. Specifically, from the infinite-dimensional formulation of Kullback-Leibler divergence established in
Angulo and Ruiz-Medina (2023), we consider the following version   of Shannon mutual information operator as a functional  counterpart  of equation (\ref{mie}): For $\mathbf{x},\mathbf{y}\in \mathbb{R}^d$, and $n,m\geq 1,$
\begin{eqnarray}&&\mathcal{S}_{\varrho }(\|\mathbf{x}-\mathbf{y}\|,n,m):=\mathcal{S}_{\varrho }(\|\mathbf{x}-\mathbf{y}\|)(\phi_{n})(\phi_{m})
\nonumber\\
 &&\int_{H^{2}}  P_{\|\mathbf{x}-\mathbf{y}\|,n,m}(dh_{1}, dh_{2})
 \ln\left(\frac{p_{\|\mathbf{x}-\mathbf{y}\|,n,m}(h_{1},h_{2})}{p(h_{1})p(h_{2})}\right)\nonumber\\
 &&=\int_{H^{2}}  p(h_{1})~p(h_{2})~\left[ 1+\sum_{k=1}^{\infty }[\gamma_{\|\mathbf{x}-\mathbf{y}\|}(\phi_{n})(\phi_{m})]^{k}\right.
 \nonumber\\ &&\left.\times \mathcal{B}_{k}(h_{1}) \mathcal{B}_{k}(h_{2})\right] \ln\left(1+\sum_{k=1}^{\infty }[\gamma_{
\|\mathbf{x}-\mathbf{y}\|}(\phi_{n})(\phi_{m})]^{k}~\mathcal{B}_{k}(h_{1}) \mathcal{B}_{k}(h_{2})\right) \mu(dh_{1}, dh_{2}).\nonumber\\
\end{eqnarray}

\subsection{Simulation study}
\label{stsimul}

Let $\varphi (u)$, $u\geq 0$, be a completely monotone function, and
suppose further that $\psi (u)$, $u\geq 0$, is a positive function with a
completely monotone derivative (such functions are also called Bernstein
functions). Consider the function
\begin{equation}
C(\left\Vert z\right\Vert ,\tau )=\frac{\sigma ^{2}}{[\psi (\tau ^{2})]^{d/2}}%
\varphi \left( \frac{\left\Vert z\right\Vert ^{2}}{\psi (\tau ^{2})}\right) ,%
\text{ }\sigma ^{2}\geq 0,\text{ }(z,\tau )\in \mathbb{R}^{d}\times \mathbb{R%
}.  \label{gneit}
\end{equation}
Hence $C$ is a  covariance function under Gneiting's criterion (see Gneiting, 2002).
Let now consider the special case of functions $\varphi $ and $\psi$ given by
\begin{eqnarray}
\varphi (u)&=&\frac{1}{(1+cu^{\gamma })^{\delta }},u>0,c>0,0<\gamma \leq 1,\delta >0\nonumber\\
\psi (u)&=&(1+au^{\alpha })^{\beta },\text{ }a>0,\text{ }0<\alpha \leq 1,\text{
}0<\beta \leq 1,\text{ }u\geq 0.\label{fcf}
\end{eqnarray}

Note that, functions $\varphi $ and $\psi $ can also be written as
\begin{eqnarray}
\varphi (u)&=&\frac{\mathcal{L}_{1}(u)}{u^{\gamma \delta}},\quad \mathcal{L}_{1}(u)=\frac{u^{\gamma \delta}}{(1+cu^{\gamma })^{\delta }}\nonumber\\
\psi(u) &=& \left[\frac{\mathcal{L}_{2}(u)}{u^{\alpha \beta}}\right]^{-1},\quad \mathcal{L}_{2}=\frac{u^{\alpha \beta}}{(1+au^{\alpha })^{\beta }},
\label{Taubth}
\end{eqnarray}
where $\mathcal{L}_{i},$ $i=1,2,$ represent positive continuous slowly varying function at infinity, satisfying
\begin{equation}
\lim_{T\to \infty}\frac{\mathcal{L}_{i}\left(T\|z\|\right)}{\mathcal{L}\left(T\right)}=1, \quad i=1,2,
\label{avf}
\end{equation}
\noindent for every $z\in \mathbb{R}^{d},$ $d\geq 1.$

Applying Tauberian Theorems (see Doukhan et al., 1996; see also Theorems 4 and 11 in Leonenko and  Olenko, 2014), their Fourier transforms  satisfy
 \begin{eqnarray}\widehat{\varphi}(\lambda )=\int_{\mathbb{R}^{d}}\exp\left(-i\left\langle \lambda , z\right\rangle \right) \varphi(\|z\|^{2})dz &\sim & c(1,2\gamma \delta ) \frac{\mathcal{L}_{1}\left( \frac{1}{\|\lambda \|}\right)}{\|\lambda\|^{d-2\gamma \delta }},\quad \mbox{as }\|\lambda \|\to 0\nonumber\\
 \widehat{\psi}(\omega )=\int_{\mathbb{R}}\exp\left(-i\left\langle \omega , t\right\rangle \right) \psi(|t|^{2})dt &\sim &  c(1,2\alpha \beta)\frac{\mathcal{L}_{2}\left( \frac{1}{|\omega|}\right)}{|\omega|^{1-2\alpha \beta}},\quad \mbox{as } |\omega | \to 0,\nonumber\\
  \label{asympsd}
\end{eqnarray}
where  $c\left( d,\theta \right) =\frac{\Gamma \left(\frac{d-\theta}{2}\right)}{\pi^{d/2}2^{\theta }\Gamma
(\theta/2)},$ with $\theta =2\gamma \delta ,$ and $\theta =2\alpha \beta,$ for $0< 2\gamma \delta <d,$ and  $0<2\alpha \beta <1.$

Figure \ref{fig:63}  displays color plots surfaces of the Gaussian spatiotemporal random field, generated with   covariance function in the Gneiting  class for $\alpha =0.3,$ $\beta = 0.7,$ $\gamma=0.2,$ and $\delta=0.35,$ at times $t=25, 50, 75,100.$ Shannon mutual information surfaces are evaluated  from the reconstruction formula $$\mathcal{K}_{\mathcal{S}_{\|\mathbf{x}-\mathbf{y}\| }}(t,s)=\sum_{n=1}^{M}\sum_{m=1}^{M}\mathcal{S}_{\varrho }(\|\mathbf{x}-\mathbf{y}\|)(\phi_{n})(\phi_{m})
\phi_{n}\otimes \phi_{m}(t,s)$$  \noindent  implemented  for $M=100.$ Figure \ref{fig:63}  displays   spatial cross--sections \linebreak
$\mathcal{K}_{\mathcal{S}_{\|\mathbf{x}_{i}-\mathbf{y}_{i}\| }}(t,s)$  corresponding to  spatial nodes $(i,1),$ $i=1,2,3,4,5$   (labeled  at the  x-axis), and $(i,1),$ $i=6,7,8,9,10$  (labeled at the y-axis), at $(t,s)\in [1,100]\times [1,100].$
 It can be observed the same power law in the decay of  $\mathcal{K}_{\mathcal{S}_{\|\mathbf{x}-\mathbf{y}\| }}(t,s)$  as $\|\mathbf{x}-\mathbf{y}\|\to \infty,$  for each fixed $(t,s)
\in [1,100]\times [1,100].$

\begin{figure}[htbp]
\begin{center}
 \includegraphics[height=8.7cm, width=10.7cm]{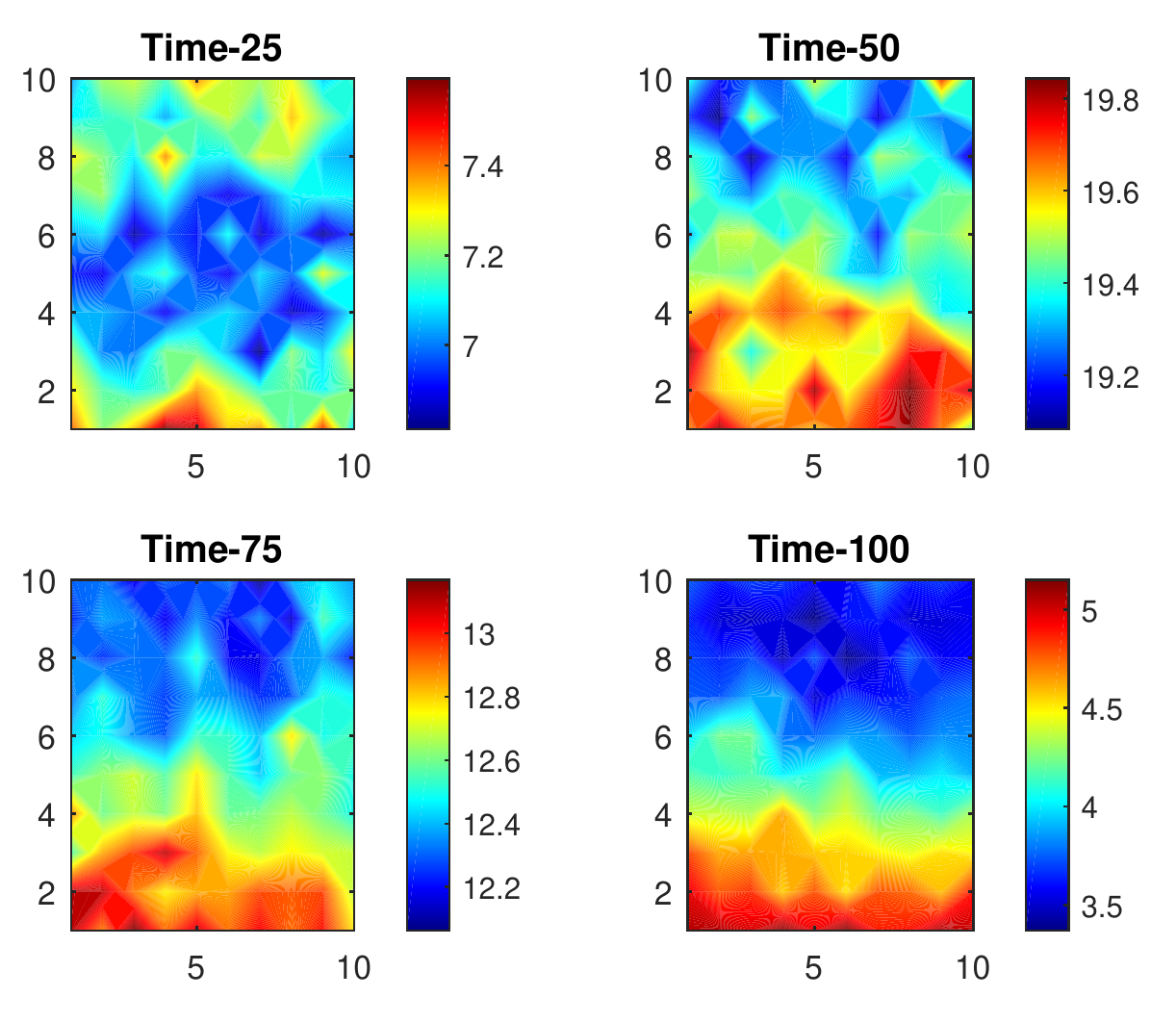}\hspace*{1cm}

\end{center}
\caption{{The surfaces values at times $t=25, 50, 75,100$  of the Gaussian spatiotemporal random field  generated with covariance function in the  Gneiting  class  are  displayed\label{fig:63}}}
\end{figure}

\begin{figure}[htbp]
\begin{center}
 \includegraphics[height=14cm, width=14cm]{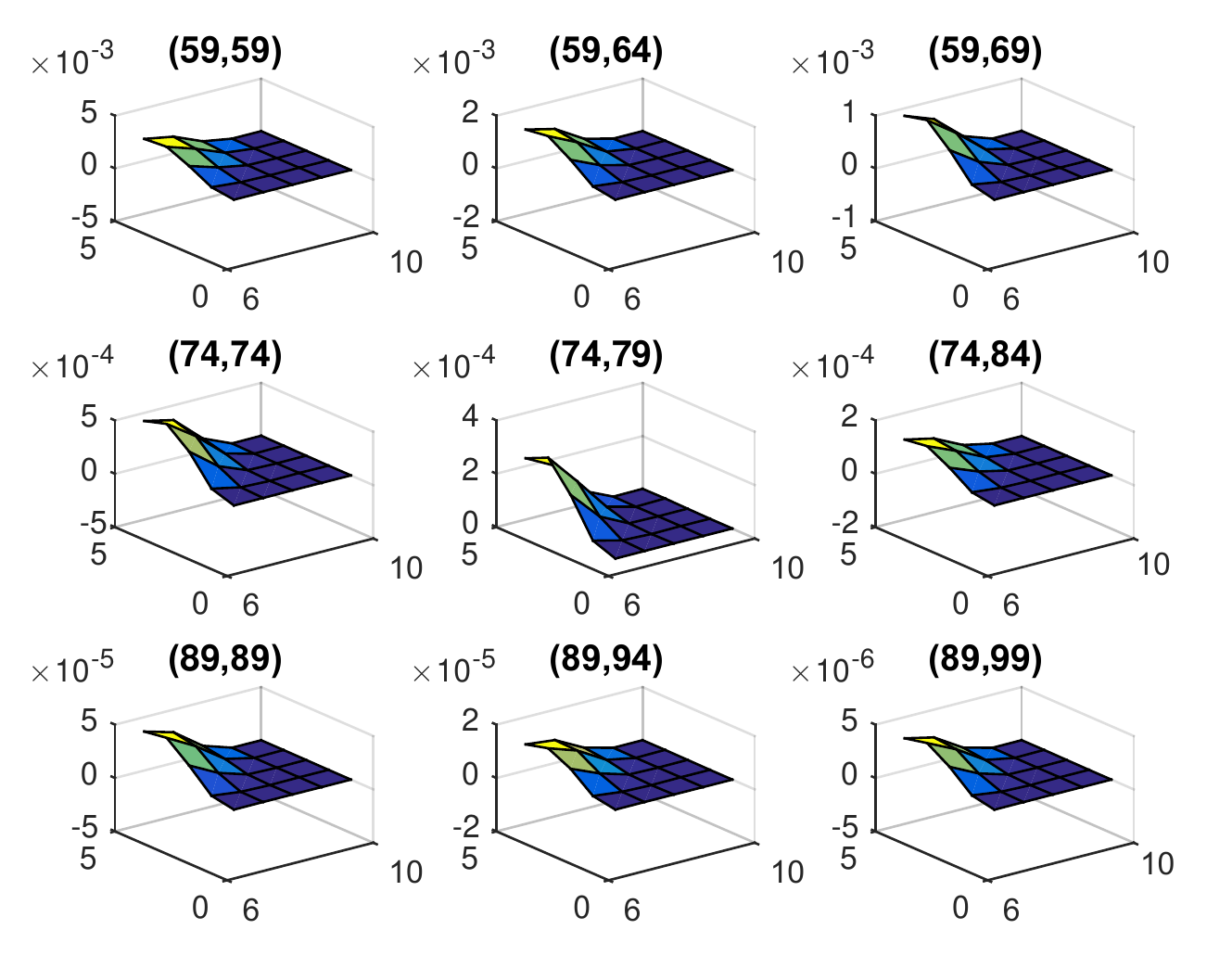}
\end{center}
\caption{{\scriptsize Shannon mutual information surfaces corresponding to crossing two random components respectively  located at spatial nodes $(i,1),$ $i=1,2,3,4,5$   (values labeled  at the x-axis), and $(i,1),$ $i=6,7,8,9,10$  (values labeled at the y-axis) of the $10\times 10$  spatial regular grid considered,     for crossing times in the set $\left[\{59,\dots, 99\}\right]^{2}.$  \label{fig:63b}}}
\end{figure}

\section{Conclusion}
\label{section:conclusion}

This paper focuses on the asymptotic mutual information based analysis of a class of spatial and spatiotemporal LRD Lancaster-Sarmanov random fields and their subordinated. Persistence  of memory in space is  characterized  in terms of the  LRD parameter, modelling the mutual information decay representing spatial structure dissipation at large scales.  Random field subordination  affects this decay rate    when the rank $m$ of the function $g$ involved in the subordination  is larger than one.   Hence,   large scale  aggregation  is lost up to order $m.$ Faster decay to zero is then observed in the corresponding asymptotic order modeling a faster spatial  structure dissipation from intermediate scales.  However,  when the rank is equal to one,  as illustrated here in the case of $g$ being the indicator function,  spatial structure dissipation occurs at the same rate, for  the original  and transformed  random variables.   The structural index provided by R\'enyi mutual information  reflects some different behaviors depending on the characteristics of the polynomial basis (in our simulation study, Hermite or Laguerre polynomials), as well as on the range analyzed for the $q$ deformation parameter. Particularly, this range induces strong changes at small spatial scales, but  the general shape of the curves reflecting asymptotic decay is invariant, and displays a power law involving the LRD  and the deformation   parameters.

In the spatiotemporal case, the class of Lancaster-Sarmanov random fields is introduced in a spatial functional framework. The simulation study undertaken shows a similar asymptotic behavior at spatial macroscale level, i.e., power law decay of the mutual information surfaces, which is accelerated at coarser temporal scales.  Thus, time varying asymptotic orders are obtained  characterizing the spatial diversity loss in a functional framework, under an increasing domain asymptotics.   Similar results will be derived, in a subsequent paper, regarding the asymptotic analysis of Shannon and R\'enyi mutual information measures, in an infinite-dimensional framework, in terms of   time-varying spatial  local complexity orders associated with a fixed domain asymptotics,  reflecting limiting behaviors at high resolution levels.

\section*{Acknowledgements}

This work has been supported in part by grants PGC2018-098860-B-I00 (J.M. Angulo), PGC2018-099549-B-I00 (M.D. Ruiz-Medina) and PID2021-128077NB-I00 (J.M. Angulo) funded by MCIN / AEI/10.13039/501100011033 / ERDF A way of making Europe, EU, and
grant CEX2020-001105-M funded by MCIN / AEI/10.13039/501100011033.


\begin{thebibliography}{99}

\bibitem{AlonsoEtAl2016} Alonso, F.J., Bueso, M.C., Angulo, J.M., 2016. Dependence assessment based on generalized relative complexity: Application to sampling network design. Methodology and Computing in Applied Probability 18, 921-933.

\bibitem{AnguloEtAl2021} Angulo, J.M., Esquivel, F.J, Madrid, A.E., Alonso, F.J., 2021. Information and complexity analysis of spatial data. Spatial Statistics 42, 100462.

\bibitem{AnguloRuiz-Medina2023} Angulo, J.M., Ruiz-Medina, M.D., 2023. Infinite-dimensional divergence information analysis. In: Trends in Mathematical, Information and Data Sciences, N. Balakrishnan, M.A. Gil, N. Mart\'{\i}n, D. Morales, M.C. Pardo (eds.), 147-157. Studies in Systems, Decision and Control 445. Springer (in press).

\bibitem{Bosq2000} Bosq, D., 2000 Linear Processes in Function Spaces. Springer-Verlag, New York.

\bibitem{BulinskiEtAl2012}
Bulinski, A., Spodarev, E., Timmermann, F., 2012. Central limit theorems for the excursion volumes of weakly dependent random fields. Bernoulli 18, 100-118.

\bibitem{Campbell1966} Campbell, L.L., 1966. Exponential entropy as a measure of extent of a distribution. Zeitschrift f\"ur Wahrscheinlichkeitstheorie und Verwandte Gebiete 5,217-225.

\bibitem{DaPrato-Zabczyk2002} Da Prato, G., Zabczyk, J., 2002.
Second Order Partial Differential Equations in Hilbert Spaces. Cambridge University Press, Cambridge, New York.

\bibitem{DoukhanEtAl1996}
Doukhan, P., Le\'on, J.R, Soulier, P., 1996.  Central and
non-central limit theorems for quadratic forms of a strongly
dependent Gaussian field. Brazilian Journal of Probability and Statistics 10, 205-223.


\bibitem{FerratyVieu2006}
Ferraty, F., Vieu, P., 2006. Nonparametric Functional Data Analysis: Theory and Practice.  Springer, New York.

\bibitem{FriasEtAl2022} Frías,  M.P., Torres-Signes, A., Ruiz-Medina, M.D., 2022. Spatial Cox processes in an infinite-dimensional framework. Test 31, 175-203.

\bibitem{Gneiting2002}
Gneiting, T., 2002. Nonseparable, stationary covariance functions for space-time data. Journal of the American Statistical Association 97, 590-600.


\bibitem{IvanovLeonenko1989}
Ivanov, A.V., Leonenko, N.N., 1989. Statistical Analysis of Random Fields. Kluwer Academic, Dordrecht.

\bibitem{KL1951} Kullback, S., Leibler, R.A. 1951. On information and sufficiency. The Annals of Mahematical Statistics 22, 79-86.

\bibitem{Lancaster1958}
Lancaster, H.O., 1958. The structure of bivariate distributions. The Annals of Mathematical Statistics 29, 719-736.

\bibitem{Ledoux1991}
Ledoux, M., Talagrand, M., 1991. Probability in Banach Spaces. Springer, Heidelberg.

\bibitem{Leonenko1999}
Leonenko, N.N., 1999. Limit Theorems for Random Fields with Singular Spectrum. Mathematics and its Applications 465. Kluwer Academic, Dordrecht.

\bibitem{LeonenkoOlenko2014}
Leonenko, N.N., Olenko, A., 2014. Sojourn measures of Student and Fisher-Snedecor random fields. Bernoulli 20, 1454-1483.

\bibitem{LeonenkoRuiz-Medina2022}
Leonenko, N.N., Ruiz-Medina, M.D., 2022.
Sojourn functionals for spatiotemporal Gaussian random fields with long-memory. Advances in Applied Probability  (in press).

\bibitem{LeonenkoRuiz-MedinaTaqqu2017}
Leonenko, N.N., Ruiz-Medina, M.D., Taqqu, M.S., 2017. Non-central limit theorems for random fields subordinated to Gamma-correlated random fields. Bernoulli 23, 3469-3507.

\bibitem{Lopez-RuizEtAl2009} L\'opez-Ruiz, R., Nagy, Á., Romera, E., Sa\~nudo, J., 2009.  A generalized statistical complexity measure: Applications to quantum systems. Journal of Mathematical Physics 50, 123528.

\bibitem{MakoginSpodarev2022} Makogin, V., Spodarev, E., 2022. Limit theorems for excursion sets of subordinated Gaussian random fields with long-range dependence. Stochastics 94, 111-142.

\bibitem{MarinucciPeccati2011} Marinucci, D., Peccati, G., 2011. Random Fields on the Sphere. Representation, Limit Theorems and Cosmological Applications. London Mathematical Society Lecture Note Series 389. Cambridge University Press, Cambridge.

\bibitem{PeccatiTaqqu2011}
Peccati, G., Taqqu, M.S., 2011. Wiener Chaos:
Moments, Cumulants and Diagrams. Springer, New York.

\bibitem{Renyi1961} R\'{e}nyi, A., 1961. On measures of entropy and information. In: Proceedings of the Fourth Berkeley Symposium on Mathematical Statistics and Probability, Berkeley, CA, USA, June 20-July 30 1960, J. Neyman (ed.) Vol. 1, 547-561. University of California Press, Berkeley.

\bibitem{RomeraEtAl2011} Romera, E., Sen, K.D., Nagy, Á., 2011. A generalized relative complexity measure. Journal of Statistical Mechanics: Theory and Experiment 2011(9), P09016.

\bibitem{Ruiz-Medina2022} Ruiz-Medina, M.D., 2022. Spectral analysis of long range dependence functional time series. Fractional Calculus and Applied Analysis 25, 1426-1458.

\bibitem{Sarmanov1963}
Sarmanov, O.V., 1963. Investigation of stationary Markov processes by the method of eigenfunction expansion. Selected Translations in Mathematical Statistics and Probability 4, 245-269.

\bibitem{Shannon1948}
Shannon, C.E., 1948. A mathematical theory of communication. The Bell System Technical Journal 27, 379-423.

\bibitem{Simon2014}
 Simon, T., 2014. Comparing Fr\'echet and positive stable laws. Electronic Journal of Probability 19, 1-25.

\bibitem{TorresignesEtAl2021}
Torres-Signes, A., Frías, M.P., Ruiz-Medina, M.D., 2021.
COVID-19 mortality analysis from soft-data multivariate curve regression and machine learning. Stochastic Environmental Research and Risk Assessment 35, 2659-2678.

\end{thebibliography}
\end{document}